\renewcommand{\subsection}{\@startsection
{subsection}{2}{0mm}{\baselineskip}{-0.25cm}
{\normalfont\normalsize\em}}
\def\negei{\mathbf e_i}
\def\negej{\mathbf e_j}
\def\negP{\mathbf P}
\def\negQ{\mathbf Q}
\def\negalpha{\text{\boldmath$\alpha$}}
\def\neg1{\text{\boldmath$1$}}
\def\negbeta{\text{\boldmath$\beta$}}
\def\neggamma{\text{\boldmath$\gamma$}}
\def\negeta{\text{\boldmath$\eta$}}
\def\neggamma{\text{\boldmath$\gamma$}}
\def\negeta{\text{\boldmath$\eta$}}
\def\neg1{\text{\boldmath$1$}}
\def\hH{\widehat{H}}
\def\hGamma{\widehat{\Gamma}}
\def\hLambda{\widehat{\Lambda}}
\def\cC{\mathcal C}
\def\cL{\mathcal L}
\def\cX{\mathcal X}
\def\NN{\mathbb{N}}
\def\ZZ{\mathbb{Z}}
\def\FF{\mathbb{F}}
\def\Fq{{\mathbb{F}_q}}
\DeclareMathOperator{\divv}{div}
\DeclareMathOperator{\divvp}{div_\infty}
\DeclareMathOperator{\lub}{lub}
\newtheorem{theorem}{Theorem}[section]
\newtheorem{proposition}[theorem]{Proposition}
\newtheorem{corollary}[theorem]{Corollary}
\newtheorem{lemma}[theorem]{Lemma}
\theoremstyle{definition}
\newtheorem{definition}[theorem]{Definition}
\newtheorem{example}[theorem]{Example}
\theoremstyle{remark}
\newtheorem{remark}[theorem]{Remark}
\title[On Weierstrass gaps at several points]{On Weierstrass gaps at several points}
\author[W. Ten\'orio \and G. Tizziotti]{Wanderson Ten\'orio \and Guilherme Tizziotti}
 \address{Universidade Federal de Uberl\^andia (UFU), Faculdade de Matem\' atica, Av.~J.~N. \' Avila~2121, 38408-902, Uberl\^andia, MG, Brazil}
  \email{dersonwt@yahoo.com.br, guilhermect@ufu.br}
	  \date{\today}
\begin{document}


\maketitle

\begin{abstract} 
We consider the problem of determining Weierstrass gaps and pure Weierstrass gaps at several points. Using the notion of relative maximality in generalized Weierstrass semigroups due to Delgado \cite{D}, we present a description of these elements which generalizes the approach of Homma and Kim \cite{HK} given for pairs. Through this description, we study the gaps and pure gaps at several points on a certain family of curves with separated variables.
\end{abstract}

\section{Introduction}

Let $\mathcal{X}$ be a (nonsingular, projective, geometrically irreducible, algebraic) curve of positive genus $g(\cX)$ defined over $\mathbb{F}_q$, the finite field with $q$ elements. Let $\mathbb{F}_q(\mathcal{X})$ be its associated function field and let $\mbox{Div}(\cX)$ be the set of divisors on $\cX$. For $G\in \mbox{Div}(\cX)$, as usual, $\cL(G)$ will denote the Riemann-Roch space $\{h\in \FF_q(\cX) \ : \ \divv(h)+G\geq 0\}\cup \{0\}$ and $\ell(G)$ its dimension as an $\FF_q$-vector space. Given $m\geq 1$ pairwise distinct rational points $Q_1,\ldots,Q_m$ on $\mathcal{X}$, consider the $m$-tuple $\mathbf{Q} = (Q_{1}, \ldots , Q_{m})$. The \textit{(classical) Weierstrass semigroup} of $\cX$ at $\negQ$, introduced in Arbarello et al. \cite{Arb}, is defined as
$$
H(\mathbf{Q}) := \left\{(\alpha_{1}, \ldots, \alpha_{m}) \in \mathbb{N}_{0}^ {m} \mbox{ : } \exists \ h \in \mathbb{F}_q(\mathcal{X})^\times \mbox{ with } \divvp(h) = \sum_{i=1}^ {m} \alpha_{i}Q_{i} \right\},
$$
where $\divv(h)$ and $\divvp(h)$ stand respectively for the divisor and the divisor of poles of the rational function $h$. The set $H(\mathbf{Q})$ is a subsemigroup of $\NN_0^m$ with respect to addition. Roughly speaking, this semigroup carries information about the Riemann-Roch spaces of divisors of type $D_\negalpha:=\alpha_1Q_1+\cdots+\alpha_mQ_m$, where $\negalpha=(\alpha_1,\ldots,\alpha_m)\in \NN_0^m$; indeed, under the assumption $q\geq m$, we have $\negalpha\in H(\negQ)$ if and only if ${\ell(D_\negalpha)=\ell(D_\negalpha-Q_j)+1}$ for ${j=1,\ldots,m}$. On the other hand, the elements in the finite complement ${G(\mathbf{Q}):=\NN^m_0\backslash H(\mathbf{Q})}$, called \emph{Weierstrass gaps} of $\cX$ at $\mathbf{Q}$ (or simply \emph{gaps}), also play an important role in the study of such Riemann-Roch spaces. A \emph{pure Weierstrass gap} (or simply \textit{pure gap}) is an $m$-tuple $\negalpha \in G(\mathbf{Q})$ such that $\ell(D_\negalpha) = \ell(D_\negalpha - Q_j)$ for  $j=1,\ldots,m$. The set of pure gaps of $\cX$ at $\mathbf{Q}$ will be denoted by $G_0(\mathbf{Q})$; for further information concerning these objects, we refer the reader to the survey \cite{CK}.

At one single point, the cardinality of the gaps is precisely the genus of $\mathcal{X}$, and its study is a classical research theme in algebraic-geometry. For the several points case, some questions concerning $G(\negQ)$ and $G_0(\negQ)$ remain unanswered. One of central interests in these objects comes from their applications to the analysis of multipoint algebraic-geometric codes. In particular, the use of pure gaps, introduced first by Homma and Kim in \cite{HK} for pairs and extended to several points by Carvalho and Torres \cite{CT}, is related to improvements on the Goppa bound of the minimum distance of these codes. For $m=2$, the sets $G(Q_1,Q_2)$ and $G_0(Q_1,Q_2)$ were extensively studied by Homma and Kim in a serie of papers \cite{H,K,HK}. They introduced a special finite subset of the classical Weierstrass semigroup at a pair carrying information about the whole semigroup, as well as information about the gaps and pure gaps. This set was called later by Matthews \cite{Ma}  the \emph{minimal generating set} of the classical Weierstrass semigroups at pairs, where she extended such notion to the general case $m\geq 2$. Despite such generating sets at several points describe well the elements of their respective semigroups, it seems that they are not directly related with the gaps and pure gaps as in the case $m=2$. 

In this work we study special elements in $\NN^m$ that bring information on gaps and pure gaps. In Theorems \ref{gaps} and \ref{puregaps}, we state a general characterization of gaps and pure gaps at several points which generalizes that one given by Homma and Kim for pairs and has not been noticed and explored previously. To accomplish this, we use the notion of maximality (see Def. \ref{def:maximals}) due to Delgado \cite{D} to generalized Weierstrass semigroups. Delgado's work deals with a generalization of classical Weierstrass semigroups as follows: considering the $m$-tuple $\mathbf{Q}=(Q_1,\ldots,Q_m)$ as before, $R_{\mathbf{Q}}$ denotes the subset of $\mathbb{F}_q(\cX)$ consisting of rational functions which are regular outside $\{Q_1,\ldots,Q_m\}$. The \emph{generalized Weierstrass semigroup} of $\cX$ at $\mathbf{Q}$ is then the set
$$ \widehat{H} (\mathbf{Q}):=\{(-v_{Q_1}(h),\ldots,-v_{Q_m}(h))\in \mathbb{Z}^m \ : \ h\in R_{\mathbf{Q}}\backslash\{0\} \},$$
where $v_{Q_i}$ is the discrete valuation of $\mathbb{F}_q(\mathcal{X})$ associated with $Q_i$. Beelen and Tutas \cite{BT} studied these objects for curves defined over finite fields---in \cite{D} was assumed curves over algebraically closed fields---and present many interesting properties, especially for a pair of points. These structures were recently explored in \cite{MTT}, where there is established connections between the concepts of maximal elements in \cite{D} and generating sets for $\hH(\negQ)$ in the sense of \cite{Ma}. Explicit computations relying on the approach of \cite{MTT} and its outcomes are presented in \cite{TT} for certain special curves with separable variables, denoted by $\cX_{f,g}$ (see Sec. 4).

Many works have appeared recently dedicated mainly to provide descriptions of pure gaps at several points for specific curves. For instance, Bartoli, Quoos, and Zini \cite{bartoli} gave a criterion to find pure gaps at many places and presented families of pure gaps at several points on curves associated with Kummer extensions. In \cite{HY}, Hu and Yang yielded a characterization of gaps and pure gaps at several points also on Kummer extensions. The same authors in \cite{HY2} also furnished an arithmetic/combinatorial description of pure Weierstrass gaps at many totally ramified places on a quotient of the Hermitian curve. To illustrate our approach, we present in this work a study of gaps and pure gaps at several points on $\cX_{f,g}$. This family of curves contains the Kummer extensions and the quotient of the Hermitian curve, studied respectively in \cite{bartoli}, \cite{HY}, and \cite{HY2}.

This paper is organized as follows. In Section 2 we fix the notation and recall some
facts about generalized Weierstrass semigroups at several points and their maximal elements. We also present some properties of relative maximal elements in generalized Weierstrass semigroups. As stated above, Section 3 is devoted to examining the relation of relative maximal elements in the description of gaps and pure gaps at several points. Section 4 specializes the computation of relative maximal elements at several points on the curves $\cX_{f,g}$. This enables us describing the gaps and pure gaps in such curves.

\section{Generalized Weierstrass semigroups and their maximal elements}

\subsection{Settings, notation and preliminary results} In the following, $\mathcal{X}$ will denote a curve defined over $\FF_q$ as in Introduction. For $m\geq 2$, we let $\negQ=(Q_1,\ldots,Q_m)$ be an $m$-tuple of distinct rational points on $\cX$.

Set $I:=\{1,\ldots,m\}$. Following the notation of \cite{D}, we shall consider subsets of $\ZZ^m$ with interesting properties. For $i\in I$, a nonempty $J\subsetneq I$ and $\negalpha=(\alpha_1,\ldots,\alpha_m)\in \ZZ^m$, we shall denote
\begin{itemize}
\item [$\triangleright$] $\nabla_i^m(\negalpha):=\{\negbeta \in \hH(\negQ) \ : \ \beta_i=\alpha_i \ \mbox{and } \beta_j\leq\alpha_j \ \mbox{for } j\neq i\}$;
\item [$\triangleright$] $\overline{\nabla}_J(\negalpha):=\{\negbeta \in \ZZ^m \ : \ \beta_j=\alpha_j \ \mbox{for } \ j\in J \ \mbox{and, } \beta_i<\alpha_i \ \mbox{for } i\not\in J\}$;
\item [$\triangleright$] $\nabla_J(\negalpha):=\overline{\nabla}_J(\negalpha)\cap \hH(\negQ)$;
\item [$\triangleright$] $\overline{\nabla}(\negalpha):=\bigcup_{i=1}^m \overline{\nabla}_i(\negalpha)$, where $\overline{\nabla}_i(\negalpha)=\overline{\nabla}_{\{i\}}(\negalpha)$;
\item [$\triangleright$] $\nabla(\negalpha):=\overline{\nabla}(\negalpha)\cap \hH(\negQ)$;
\item [$\triangleright$] $\textbf{1}_J$ denotes the $m$-tuple whose the $j$-th coordinate $1$ if $j \in J$ and $0$ otherwise; for instance  $\textbf{e}_i = \textbf{1}_{\{i\}} =  (0,\ldots,0,1,0,\ldots,0)$;
\item [$\triangleright$] $\textbf{1}:=(1,\ldots,1)$ and $\textbf{0}:=(0,\ldots,0)$.
\end{itemize}

For $\neggamma=(\gamma_1,\ldots,\gamma_m)\in \mathbb{Z}^m$,  $D_\neggamma$ will denote the divisor $ \gamma_1Q_1+\cdots+\gamma_mQ_m$ on $\cX$.

\begin{proposition}[\cite{MTT}, Proposition 2.1] \label{prop:elements}
Let $\negalpha \in \mathbb{Z}^m$ and assume that $q \geq m$. Then
\begin{enumerate}[\rm (1)]
\item $\negalpha \in \widehat{H} (\mathbf{Q})$ if and only if $\ell(D_{\negalpha}) = \ell(D_{\negalpha} - Q_i)+1$, for all $i \in I$;
\item for $i\in I$, we have $\nabla_{i}^{m}(\negalpha) = \emptyset$ if and only if $\ell(D_{\negalpha})=\ell(D_{\negalpha} - Q_i)$.
\end{enumerate}
\end{proposition}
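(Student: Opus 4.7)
\emph{Proof plan.} The plan is to translate both statements about $\hH(\negQ)$ into assertions about the Riemann--Roch spaces $\cL(D_\negalpha)$ and $\cL(D_\negalpha-Q_i)$, and then read them off from standard facts. Throughout, the key observation is that a rational function $h$ has $-v_{Q_i}(h)=\alpha_i$ exactly when $h\in\cL(D_\negalpha)\setminus\cL(D_\negalpha-Q_i)$.

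I would begin with part (2), since its proof is mostly bookkeeping. For the direction $\ell(D_\negalpha)>\ell(D_\negalpha-Q_i)\Rightarrow \nabla_i^m(\negalpha)\neq\emptyset$, pick any $h\in\cL(D_\negalpha)\setminus\cL(D_\negalpha-Q_i)$. Then $h$ has no poles outside $\{Q_1,\ldots,Q_m\}$, so $h\in R_\negQ$, and by construction $v_{Q_i}(h)=-\alpha_i$ exactly while $v_{Q_j}(h)\geq -\alpha_j$ for $j\neq i$. The tuple $\negbeta=(-v_{Q_1}(h),\ldots,-v_{Q_m}(h))$ then lies in $\nabla_i^m(\negalpha)$. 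The converse runs analogously: an element $\negbeta\in\nabla_i^m(\negalpha)$ arises from some $h\in R_\negQ$ whose valuations at the $Q_j$ place it in $\cL(D_\negalpha)\setminus\cL(D_\negalpha-Q_i)$, so the two dimensions strictly differ.

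For part (1), the forward implication is quick: if $\negalpha\in\hH(\negQ)$ then $\negalpha\in\nabla_i^m(\negalpha)$ for every $i$, so part (2) yields $\ell(D_\negalpha)>\ell(D_\negalpha-Q_i)$, and the standard inequality $\ell(D_\negalpha)\leq\ell(D_\negalpha-Q_i)+1$ forces equality with $+1$. The converse is the only delicate point, and I expect it to be the main obstacle: from the hypothesis one gets $m$ proper subspaces $\cL(D_\negalpha-Q_i)\subsetneq\cL(D_\negalpha)$, and one must produce a single $h\in\cL(D_\negalpha)$ lying outside each of them simultaneously (such $h$ then satisfies $-v_{Q_i}(h)=\alpha_i$ for all $i$, has no poles outside $\{Q_1,\ldots,Q_m\}$, and witnesses $\negalpha\in\hH(\negQ)$). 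This is where the assumption $q\geq m$ enters: I would invoke the classical linear algebra lemma that a vector space over $\FF_q$ cannot be covered by $q$ or fewer proper subspaces, applied to $V=\cL(D_\negalpha)$ and $V_i=\cL(D_\negalpha-Q_i)$. Without the hypothesis $q\geq m$, the union $\bigcup_{i}\cL(D_\negalpha-Q_i)$ could conceivably exhaust $\cL(D_\negalpha)$, yielding witnesses for each individual coordinate but no common function realising the whole tuple $\negalpha$.
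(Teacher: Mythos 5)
Your argument is correct: part (2) is the direct translation between functions realising a pole tuple and the containments $\cL(D_\negalpha-Q_i)\subsetneq\cL(D_\negalpha)$, and for the converse of part (1) you correctly identify that the hypothesis $q\geq m$ is exactly what lets the subspace-covering lemma produce a single $h$ avoiding all $m$ proper subspaces $\cL(D_\negalpha-Q_i)$ simultaneously. The paper itself states this proposition without proof, importing it from \cite{MTT}; your argument is the standard one used there, so there is nothing to flag.
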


\begin{remark}\label{rmk:2g}
Let $\negbeta=(\beta_1, \ldots, \beta_m)\in \ZZ^m$. Notice that $\negbeta\in \hH(\negQ)$ if $\beta_1+\cdots+\beta_m\geq 2g(\cX)$.
\end{remark}

\begin{lemma}[\cite{D}, p. 629] \label{teclem} Assume that $q\geq m$. If $\negbeta,\negbeta'\in \hH(\negQ)$ with $\negbeta\neq \negbeta'$ and $\beta_i=\beta'_i$ for some $i\in I$, then there exists $\neggamma\in \hH(\negQ)$ such that $\gamma_i<\beta_i=\beta'_i$ and $\gamma_j\leq \max\{\beta_k,\beta'_k\}$ for $k\neq i$ (and the equality holds if $\beta_k\neq \beta'_k$).
\end{lemma}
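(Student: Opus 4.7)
The plan is to realise the two tuples as pole vectors of concrete functions in $R_\negQ$ and then cancel their common leading behaviour at $Q_i$ by an $\FF_q$-linear combination. By the definition of $\hH(\negQ)$, there exist $h,h'\in R_\negQ\setminus\{0\}$ with $-v_{Q_j}(h)=\beta_j$ and $-v_{Q_j}(h')=\beta'_j$ for every $j\in I$. Fix a local uniformizer $t_i$ at the rational point $Q_i$. Since $\beta_i=\beta'_i$, both $t_i^{\beta_i}h$ and $t_i^{\beta_i}h'$ are regular at $Q_i$ with nonzero values $c,c'$ in the residue field, which equals $\FF_q$ because $Q_i$ is $\FF_q$-rational. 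Setting $\lambda:=c(c')^{-1}\in\FF_q^\times$, the function $f:=h-\lambda h'$ lies in $R_\negQ$ (an $\FF_q$-algebra) and, by construction, $v_{Q_i}(f)>-\beta_i$.

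The next step is to verify that $f\neq 0$ and to control its orders at the remaining points. Since $\negbeta\neq\negbeta'$ and their $i$-th coordinates coincide, there is some $k\neq i$ with $\beta_k\neq\beta'_k$; at such a $k$, the valuations $v_{Q_k}(h)=-\beta_k$ and $v_{Q_k}(\lambda h')=-\beta'_k$ differ (note $\lambda\neq 0$), so the strict triangle inequality for discrete valuations forces
\[
v_{Q_k}(f)=\min\{-\beta_k,-\beta'_k\}=-\max\{\beta_k,\beta'_k\},
\]
in particular $f\neq 0$ and the bound is attained. For an arbitrary $k\neq i$, whether $\beta_k=\beta'_k$ or not, the ordinary triangle inequality yields $v_{Q_k}(f)\geq \min\{v_{Q_k}(h),v_{Q_k}(\lambda h')\}=-\max\{\beta_k,\beta'_k\}$, hence $-v_{Q_k}(f)\leq\max\{\beta_k,\beta'_k\}$.

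Putting these observations together, the tuple $\neggamma:=(-v_{Q_1}(f),\ldots,-v_{Q_m}(f))$ belongs to $\hH(\negQ)$ and satisfies $\gamma_i<\beta_i=\beta'_i$ together with $\gamma_k\leq\max\{\beta_k,\beta'_k\}$ for all $k\neq i$, with equality whenever $\beta_k\neq\beta'_k$. The step that requires genuine care is the residue computation at $Q_i$: one needs the leading coefficients $c,c'$ of $t_i^{\beta_i}h$ and $t_i^{\beta_i}h'$ to lie in the same subfield so that a single scalar $\lambda\in\FF_q^\times$ kills the leading term, and this is exactly guaranteed by the rationality of $Q_i$. The standing assumption $q\geq m$ (which enters Proposition~\ref{prop:elements} through a dimension count over $\FF_q$) is not actually invoked in this argument.
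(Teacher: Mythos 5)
Your proof is correct: the paper itself gives no argument for this lemma (it is quoted from Delgado), and your cancellation of leading coefficients at $Q_i$ by a scalar $\lambda\in\FF_q^\times$ followed by the ultrametric inequality at the other points is exactly the standard proof of the cited result, including the equality case when $\beta_k\neq\beta'_k$ and the nonvanishing of $h-\lambda h'$. Your observation that only the rationality of $Q_i$ (not the hypothesis $q\geq m$) is needed here is also accurate.
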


We now recall the notion of maximal elements in $\hH(\negQ)$ introduced by Delgado in \cite{D}. As we will see in what follows, these elements will play an important role in description of elements of $\hH(\negQ)$, as well as $G(\negQ)$.

\begin{definition}\label{def:maximals} An element $\negalpha\in \widehat{H} (\mathbf{Q})$ is called \emph{maximal} if ${\nabla(\negalpha)=\emptyset}$. If ${\nabla_J(\negalpha)=\emptyset}$ for every $J\subsetneq I$ with $\#J\geq 2$, we say that $\negalpha$ is \emph{absolute maximal}. If otherwise ${\nabla_J(\negalpha)\neq\emptyset}$ for every $J\subsetneq I$ with $\#J\geq 2$, we say that $\negalpha\in \hH(\negQ)$ is \emph{relative maximal}.
\end{definition}

Observe that the notions of absolute and relative maximality coincide when $m=2$. The sets of absolute and relative maximal elements in $\widehat{H} (\mathbf{Q})$ will be denoted, respectively, by $\hGamma(\mathbf{Q})$ and $\hLambda(\negQ)$. Despite these sets are infinity, they can be finitely determined as follows.\\

For $i=2,\ldots,m$, let $a_i$ be the smallest positive integer $t$ such that ${tQ_{i}-tQ_{i-1}}$ is a principal divisor on $\cX$. We can thus define the region
$$
\cC(\negQ):=\{\negalpha=(\alpha_1, \ldots , \alpha_m)\in \ZZ^m \ : \ 0\leq \alpha_i< a_i \ \mbox{for } i=2,\ldots,m\}.
$$
Let $\negeta^i = (\eta_1^i, \ldots, \eta_m^i)\in \ZZ^m$ be the $m$-tuple whose $j$-th coordinate is
$$
\eta^i_j=\left \lbrace \begin{array}{rcl}
-a_i & , & \mbox{if } j=i-1\\
a_i & , & \mbox{if } j=i\\
0 & , & \mbox{otherwise.}\\
\end{array}\right.
$$
Defining ${\Theta(\negQ):=\{ b_1 \negeta^1+ \ldots + b_{m-1} \negeta^{m-1}\in \ZZ^m \ : b_i \in \ZZ \ \mbox{for } i=1,\ldots,m-1\}}$, we have the following result concerning the finite determination of $\hGamma(\negQ)$ and $\hLambda(\negQ)$.

\begin{theorem}[\cite{MTT}, Theorem 3.7]\label{maximals} Assume that $q \geq m$. The following holds:
$$\hGamma(\mathbf{Q})=(\hGamma(\mathbf{Q})\cap \cC(\negQ))+\Theta(\negQ)$$
$$\hLambda(\mathbf{Q})=(\hLambda(\mathbf{Q})\cap \cC(\negQ))+\Theta(\negQ).$$
\end{theorem}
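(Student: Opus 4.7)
The strategy I would follow is to exploit that the lattice $\Theta(\negQ)$ acts by translation on $\hH(\negQ)$ preserving both membership and the maximality structure, and then to reduce every maximal element to $\cC(\negQ)$ via a fundamental-domain argument. The identities will then follow from the fact that every $\negalpha\in\ZZ^m$ admits a representative in $\cC(\negQ)$ modulo $\Theta(\negQ)$.

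First I would verify that $\hH(\negQ)$ is invariant under $\Theta(\negQ)$-translation. This reduces to showing $\pm\negeta^i\in\hH(\negQ)$ for each generator: by the defining property of $a_i$, there is $h_i\in\FF_q(\cX)^\times$ with $\divv(h_i)=a_iQ_i-a_iQ_{i-1}$, and since its support lies in $\{Q_1,\ldots,Q_m\}$, both $h_i$ and $h_i^{-1}$ belong to $R_\negQ\setminus\{0\}$; closure of $\hH(\negQ)$ under addition then yields $\hH(\negQ)+\Theta(\negQ)=\hH(\negQ)$. Next, the central step is to show that $\Theta(\negQ)$-translation preserves $\hGamma(\negQ)$ and $\hLambda(\negQ)$: the trivial set equality $\overline{\nabla}_J(\negalpha+\negtheta)=\overline{\nabla}_J(\negalpha)+\negtheta$, combined with the invariance above, upgrades after intersection with $\hH(\negQ)$ to
\[
\nabla_J(\negalpha+\negtheta)=\nabla_J(\negalpha)+\negtheta
\]
for every $\negtheta\in\Theta(\negQ)$ and every $J\subsetneq I$. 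The emptiness of $\nabla_J(\negalpha)$ (and of $\nabla(\negalpha)$, by taking unions over singletons) is therefore preserved, which makes $\hGamma(\negQ)$ and $\hLambda(\negQ)$ stable under $\Theta(\negQ)$-translation and immediately yields the inclusion $\supseteq$ in each of the two displayed identities.

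For the reverse inclusion, given $\negalpha\in\hGamma(\negQ)$ (resp.~$\hLambda(\negQ)$), I would produce a translate in $\cC(\negQ)$ by a triangular reduction: successively for $i=m,m-1,\ldots,2$, subtract an appropriate integer multiple of $\negeta^i$ from the current tuple to bring its $i$-th coordinate into $[0,a_i)$. Since $\negeta^i$ has nonzero entries only at positions $i-1$ and $i$, the later steps do not disturb the coordinates already fixed in previous steps, and the first coordinate is unrestricted by the definition of $\cC(\negQ)$. This yields some $\negtheta\in\Theta(\negQ)$ with $\negalpha-\negtheta\in\cC(\negQ)$, and the preceding invariance guarantees that $\negalpha-\negtheta$ remains absolute (respectively relative) maximal, giving the decomposition $\negalpha=(\negalpha-\negtheta)+\negtheta$ of the required form.

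The main technical obstacle is the preservation of the sets $\nabla_J(\negalpha)$ under $\Theta(\negQ)$-translation, which ultimately hinges on the stability of $\hH(\negQ)$ itself under this action; once this structural observation is settled, the remainder is routine bookkeeping for a fundamental-domain reduction along a triangular lattice, and the argument goes through identically for both $\hGamma(\negQ)$ and $\hLambda(\negQ)$ since the invariance step treats all sets $\nabla_J(\negalpha)$ simultaneously.
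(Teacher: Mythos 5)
Your argument is correct and complete: the observation that each $\pm\negeta^i$ lies in $\hH(\negQ)$ because $a_iQ_i-a_iQ_{i-1}$ is principal, the resulting identity $\nabla_J(\negalpha+\negtheta)=\nabla_J(\negalpha)+\negtheta$ for $\negtheta\in\Theta(\negQ)$, and the triangular reduction of the coordinates $\alpha_m,\ldots,\alpha_2$ into $[0,a_i)$ together establish both displayed equalities. Note that this paper states the theorem without proof, importing it from \cite{MTT}; your translation-invariance plus fundamental-domain argument is precisely the standard route taken there, so there is nothing to compare beyond observing that your proof does not actually invoke the hypothesis $q\geq m$, which is harmless here.
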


The concept of absolute maximality was used in \cite[Theorem 3.4]{MTT} to provide a description of $\hH(\negQ)$, whose motivation comes from \cite{Ma} for classical Weierstrass semigroups. Assuming that $q \geq m$, it was proved that
$$\hH(\mathbf{Q})=\{\lub(\negalpha^1,\ldots,\negalpha^m) \ : \ \negalpha^1,\ldots,\negalpha^m\in \hGamma(\mathbf{Q})\},$$
where
$$\mbox{lub}(\negalpha^1,\ldots,\negalpha^m):=(\max\{\alpha^1_1,\ldots,\alpha^m_1\},\ldots,\max\{\alpha^1_m,\ldots,\alpha^m_m\})\in \ZZ^m.$$
For $\negbeta\in \ZZ^m$, consider the following sets absolute maximal elements related with $\cL(D_\negalpha)$
$$\hGamma(\negbeta):=\{\negalpha\in \hGamma(\negQ) \ : \ \negalpha\leq \negbeta\},$$
where $\negalpha\leq \negbeta$ means $\alpha_i\leq \beta_i$ for all $i\in I$; see \cite[Thm. 3.5 and Cor. 3.6]{MTT} for details.

\subsection{Relative maximals and discrepancies} 
In \cite{DP}, Duursma and Park introduced the concept of discrepancies. As shown in \cite{TT}, it is possible to express notion of absolute maximal elements in generalized Weierstrass semigroups by using discrepancies, which has shown an important tool for explicit computations. In the rest of this section, we explore such similar connections with relative maximal elements.

\begin{definition}\label{def:discrepancy} 
Let $P$ and $Q$ be distinct rational points $P$ and $Q$ on $\mathcal{X}$. A divisor $A \in \mbox{Div}(\mathcal{X})$ is called a \textit{discrepancy} with respect to $P$ and $Q$ if $\mathcal{L}(A)\neq \mathcal{L}(A-Q)$ and $\mathcal{L}(A-P)=\mathcal{L}(A-P-Q)$.
\end{definition}

The next technical lemma will be an important tool in computations with discrepancies.


\begin{lemma}[\cite{fulton}, Noether's Reduction Lemma] \label{lemma noether}
Let $D\in \mbox{Div}(\cX)$, $P \in \mathcal{X}$, and let $K$ be a canonical divisor on $\cX$. If $\ell(D)>0$ and $\ell(K-D-P) \neq \ell(K-D)$, then $\ell(D+P)=\ell(D)$.
\end{lemma}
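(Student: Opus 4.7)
The plan is to deduce the equality by a direct application of the Riemann--Roch theorem to the pair of divisors $D$ and $D+P$, combined with the basic fact that adjoining one point to a divisor can raise the dimension of its Riemann--Roch space by at most one. Concretely, I would write
\begin{align*}
\ell(D)-\ell(K-D)&=\degg(D)+1-g(\cX),\\
\ell(D+P)-\ell(K-D-P)&=\degg(D)+2-g(\cX),
\end{align*}
and subtract the first from the second to obtain the key identity
\[
\ell(D+P)-\ell(D)=1+\ell(K-D-P)-\ell(K-D).
\]

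Next, I would use the elementary observation that $\cL(K-D-P)\subseteq \cL(K-D)$ and that these two spaces differ by at most one dimension, since $K-D-P$ is obtained from $K-D$ by subtracting a single rational point. Hence $\ell(K-D)-\ell(K-D-P)\in\{0,1\}$, and the hypothesis $\ell(K-D-P)\neq \ell(K-D)$ forces $\ell(K-D-P)=\ell(K-D)-1$. Substituting this into the identity above yields $\ell(D+P)-\ell(D)=1-1=0$, which is exactly the desired conclusion.

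The assumption $\ell(D)>0$ plays essentially no active role in the algebraic manipulation; I read it as a hypothesis included to mark the geometric content of the statement, namely that starting from a nonempty linear system $|D|$ the condition of vanishing on $D+P$ being strictly stronger than vanishing on $D$ (on the canonical/adjoint side) prevents $|D+P|$ from being strictly larger than $|D|$. I do not expect a genuine obstacle in executing this plan: the whole proof reduces to a few lines of bookkeeping once Riemann--Roch and the one-point dimension inequality are available. The only point requiring mild care is that Riemann--Roch is being invoked for divisors that need not have non-negative degree, but this is harmless since the full form of Riemann--Roch holds for arbitrary divisors on $\cX$.
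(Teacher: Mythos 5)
Your argument is correct. The paper itself offers no proof of this lemma --- it is imported verbatim from Fulton as a black box --- so there is nothing internal to compare against, but your derivation is the standard one and is worth commenting on. Writing Riemann--Roch for $D$ and for $D+P$ and subtracting gives $\ell(D+P)-\ell(D)=1+\ell(K-D-P)-\ell(K-D)$, and since $\ell(K-D)-\ell(K-D-P)\in\{0,1\}$, the hypothesis $\ell(K-D-P)\neq\ell(K-D)$ pins this difference at $1$ and kills the right-hand side; each step is valid for divisors of arbitrary degree. Your observation that $\ell(D)>0$ is inert is also accurate, and the explanation is historical rather than logical: in Fulton's development the Reduction Lemma is proved \emph{before} Riemann--Roch (indeed it is a key ingredient in Fulton's proof of Riemann--Roch), by an elementary argument that genuinely manipulates an effective divisor in $|D|$ and therefore needs $\ell(D)>0$. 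Your proof, by taking Riemann--Roch as given, is shorter and strictly more general, at the cost of being circular if one were trying to reproduce Fulton's logical order --- a non-issue here, since the paper only uses the lemma as a computational tool in the proof of Theorem \ref{relativemaximals}, well downstream of Riemann--Roch.
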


The following result establishes equivalences for the notion of relative maximality given in Definition \ref{def:maximals}. We remark that the equivalence $(1) \Leftrightarrow (5)$ is an adaptation of \cite[Lem. 1.3]{DM} for generalized Weierstrass semigroups at several points.

\begin{proposition} \label{relmax}
Let $\negalpha\in \ZZ^m$ and assume that $q \geq m$. The following statements are equivalent:
\begin{enumerate}[\rm (1)]
\item $\negalpha$ is relative maximal;
\item $\nabla(\negalpha)=\emptyset$ and $\ell(D_{\negalpha})=\ell(D_{\negalpha-\textbf{1}})+(m-1)$;
\item for $i,j\in I$ with $j\neq i$, $D_{\negalpha-\textbf{1}}+Q_i+Q_j$ is discrepancy w.r.t. $Q_i$ and $Q_j$;
\item there exists $i\in I$ such that $D_{\negalpha-\textbf{1}}+Q_i+Q_j$ is discrepancy w.r.t. $Q_i$ and $Q_j$, for any $j\neq i$;
\item there exists $i\in I$ such that $\nabla_i(\negalpha)=\emptyset$ and $\nabla_{i,j}(\negalpha)\neq \emptyset$ for every $j\neq i$.
\end{enumerate}
\end{proposition}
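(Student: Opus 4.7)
The plan is to establish the cyclic chain $(1)\Rightarrow(2)\Rightarrow(3)\Rightarrow(4)\Rightarrow(5)\Rightarrow(1)$. The central technical device is the local \emph{leading-coefficient} map $\phi=(\phi_1,\dots,\phi_m)\colon\cL(D_\negalpha)\to\FF_q^m$, where $\phi_i(h)$ records the coefficient of $t_i^{-\alpha_i}$ in the Laurent expansion of $h$ at $Q_i$. Its kernel is $\cL(D_{\negalpha-\textbf{1}})$, so its image $W\subseteq\FF_q^m$ has dimension $\ell(D_\negalpha)-\ell(D_{\negalpha-\textbf{1}})$. Throughout, I use the dictionary: nonzero vectors in $W$ supported exactly on $J$ correspond to elements of $\nabla_J(\negalpha)$, and applying Proposition~\ref{prop:elements}(2) to $\negbeta=\negalpha-\textbf{1}+\nege_i$ gives $\nabla_i(\negalpha)=\emptyset$ iff $\nege_i\notin W$.

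For $(1)\Rightarrow(2)$: $\nabla(\negalpha)=\emptyset$ is part of relative maximality, forcing $\nege_i\notin W$ for each $i$ and hence $\dim W\le m-1$; the relative-maximality assumption further supplies vectors in $W$ coming from each $\nabla_{\{1,j\}}(\negalpha)\ne\emptyset$ for $j=2,\dots,m$, which are linearly independent by inspection of their $j$-th coordinates and pin down $\dim W=m-1$. For $(2)\Rightarrow(3)$: the ``second half'' $\ell(D_{\negalpha-\textbf{1}}+Q_j)=\ell(D_{\negalpha-\textbf{1}})$ of the discrepancy condition comes from $\nabla_j(\negalpha)=\emptyset$ via Proposition~\ref{prop:elements}(2); the ``first half'' $\ell(D_{\negalpha-\textbf{1}}+Q_i+Q_j)=\ell(D_{\negalpha-\textbf{1}}+Q_i)+1$ is argued by contradiction, since its failure would cap $\ell(D_\negalpha)-\ell(D_{\negalpha-\textbf{1}})$ at $m-2$. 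The step $(3)\Rightarrow(4)$ is immediate.

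For $(4)\Rightarrow(5)$: the discrepancy at the pivot $i$ translates (via Proposition~\ref{prop:elements}(2)) into $\nabla_j(\negalpha)=\emptyset$ and $\nabla_{i,j}(\negalpha)\ne\emptyset$ for every $j\ne i$; for any chosen $i'\ne i$, Lemma~\ref{teclem} applied to elements of $\nabla_{i,i'}(\negalpha)$ and $\nabla_{i,j}(\negalpha)$ (both sharing the $i$-th coordinate $\alpha_i$) produces elements of $\nabla_{i',j}(\negalpha)$ for every $j\ne i,i'$, and combined with $\nabla_{i',i}(\negalpha)=\nabla_{i,i'}(\negalpha)\ne\emptyset$ this shows that $i'$ witnesses~(5).

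The main obstacle is $(5)\Rightarrow(1)$, in the spirit of \cite[Lem.~1.3]{DM}. With $i$ as in~(5), Lemma~\ref{teclem} applied to elements of $\nabla_{i,j_1}(\negalpha)$ and $\nabla_{i,j_2}(\negalpha)$ yields $\nabla_{j_1,j_2}(\negalpha)\ne\emptyset$ for any distinct $j_1,j_2\ne i$; moreover, if $\nabla_j(\negalpha)$ were nonempty for some $j\ne i$, pairing such an element with one in $\nabla_{i,j}(\negalpha)$ via Lemma~\ref{teclem} would produce an element of $\nabla_i(\negalpha)$, contradicting~(5), so $\nabla(\negalpha)=\emptyset$. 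The pair witnesses force $\phi_k$ surjective for every $k$, hence $\negalpha\in\hH(\negQ)$ by Proposition~\ref{prop:elements}(1). Producing $\nabla_J(\negalpha)\ne\emptyset$ for every proper $J$ with $|J|\ge 2$ is the delicate step: Lemma~\ref{teclem} alone does not lift pair witnesses to larger supports. Instead, the pair witnesses give $\dim W=m-1$ as in $(1)\Rightarrow(2)$, and for each such $J$ the subspace $W\cap\mathrm{span}(\nege_j:j\in J)$ is a hyperplane in $\mathrm{span}(\nege_j:j\in J)$ defined by $\sum_{j\in J}c_jv_j=0$ with all $c_j\ne 0$ (else some $\nege_j\in W$); the hypothesis $q\ge m>|J|$ then supplies, by a short counting argument, a solution with all $J$-coordinates nonzero, yielding the desired element of $\nabla_J(\negalpha)$.
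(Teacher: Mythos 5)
Your proof is correct, and it follows the same implication cycle $(1)\Rightarrow(2)\Rightarrow(3)\Rightarrow(4)\Rightarrow(5)\Rightarrow(1)$ with the same uses of Proposition~\ref{prop:elements}(2) and Lemma~\ref{teclem}; the genuine difference is that you work throughout in the leading-coefficient space $W\subseteq\FF_q^m$, whereas the paper works directly with $\ell(\cdot)$ and with the $\lub$ operation. For $(1)\Rightarrow(2)$ the paper telescopes $\ell(D_{\negalpha-\textbf{1}}+\sum_{j\leq i}Q_j)$ along the chain $J_i=\{1,\ldots,i\}$ via $\nabla_{J_i}(\negalpha)\subseteq\nabla_i^m(\negalpha-\textbf{1}+\textbf{1}_{J_i})$, while you count $m-1$ independent vectors in $W$ --- equivalent in substance (for $m=2$ read the witness of $\negalpha\in\hH(\negQ)$ itself as the full-support vector, since $\{1,2\}$ is not a proper subset of $I$). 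For $(4)\Rightarrow(5)$ the paper takes the $\lub$ of elements of $\nabla_i^m(\negalpha-\textbf{1}+\negei+\negej)$ and $\nabla_j^m(\negalpha-\textbf{1}+\negei+\negej)$; your detour through a second pivot $i'$ is valid but unnecessary, since the two defining conditions of a discrepancy force $\ell(A)=\ell(A-Q_i)+1=\ell(A-Q_j)+1=\ell(A-Q_i-Q_j)+1$, so $\nabla_i(\negalpha)=\emptyset$ as well and the original $i$ already witnesses (5). The real divergence is in $(5)\Rightarrow(1)$: contrary to your remark, the paper \emph{does} lift pair witnesses to arbitrary $J$, not by Lemma~\ref{teclem} alone but by writing $J$ as a union of pairs and taking the $\lub$ of the pair witnesses (closure of $\hH(\negQ)$ under $\lub$ for $q\geq m$ being imported from \cite{MTT}); you instead identify $W\cap\mathrm{span}(\nege_j : j\in J)$ as a hyperplane with all coefficients nonzero and find a fully supported vector by counting, using $q\geq m>\#J$. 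Both arguments are sound; yours makes the dependence on $q\geq m$ explicit and self-contained, while the paper's is shorter given the $\lub$ machinery.
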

\begin{proof}
$(1) \Rightarrow (2)$: It is clear that $\nabla(\negalpha)=\emptyset$, since $\negalpha$ is relative maximal. By Proposition \ref{prop:elements}(2), we have that $\nabla_1(\negalpha)=\nabla_1^m(\negalpha-\textbf{1}+\textbf{e}_1)=\emptyset$ is equivalent to $\ell(D_{\negalpha-\textbf{1}} +Q_1)=\ell(D_{\negalpha-\textbf{1}})$. For $i=1,\ldots,m-1$, let $J_i=\{1,2,\ldots,i\}$. Hence, as $\nabla_{J_i}(\negalpha)\subseteq \nabla_i^m(\negalpha-\textbf{1}+\textbf{1}_{J_i})$ and $\nabla_{J_i}(\negalpha)\neq \emptyset$ for $i=2,\ldots,m-1$, we have $\ell(D_{\negalpha-\textbf{1}}+\sum_{j=1}^i Q_j)=\ell(D_{\negalpha-\textbf{1}}+\sum_{j=1}^{i-1} Q_j)+1$ again by Proposition \ref{prop:elements}(2). Since $\negalpha\in \hH(\negQ)$, we get $\ell(D_{\negalpha})=\ell(D_{\negalpha}-Q_m)+1$. Therefore, $\ell(D_{\negalpha})=\ell(D_{\negalpha-\textbf{1}})+(m-1)$.

$(2) \Rightarrow (3)$: Note that $\nabla(\negalpha)=\emptyset$ implies $\nabla_i(\negalpha)=\nabla_i^{m}(\negalpha-\textbf{1}+\negei)=\emptyset$ for any $i\in I$, and so, by Proposition \ref{prop:elements}(2), we obtain $\ell(D_{\negalpha-\textbf{1}}+Q_i)=\ell(D_{\negalpha-\textbf{1}})$ for $i\in I$. Since $\ell(D_{\negalpha})=\ell(D_{\negalpha-\textbf{1}})+(m-1)$ and $\ell(D_{\negalpha-\textbf{1}}+Q_i)=\ell(D_{\negalpha-\textbf{1}})$ for $i\in I$, we have $\ell(D_{\negalpha-\textbf{1}}+Q_i+Q_j)=\ell(D_{\negalpha-\textbf{1}}+Q_i)+1$ for any $j\neq i$, which proves (3).

$(3) \Rightarrow (4)$: Trivial.

$(4) \Rightarrow (5)$: By Proposition \ref{prop:elements}(2) we have that ${\nabla_i^m(\negalpha-\textbf{1}+\negei)=\nabla_i(\negalpha)= \emptyset}$, since $\ell(D_{\negalpha-\textbf{1}}+Q_i)=\ell(D_{\negalpha-\textbf{1}})$. For $j\neq i$, we have $\ell(D_{\negalpha-\textbf{1}}+Q_i+Q_j)\neq \ell(D_{\negalpha-\textbf{1}}+Q_i)$ and $\ell(D_{\negalpha-\textbf{1}}+Q_i+Q_j)\neq \ell(D_{\negalpha-\textbf{1}}+Q_j)$, and from Proposition \ref{prop:elements}(2), there exist $\negbeta\in\nabla_i^m(\negalpha-\textbf{1}+\negei+\negej)$ and $\negbeta'\in\nabla_j^m(\negalpha-\textbf{1}+\negei+\negej)$. Therefore, $\mbox{lub}(\negbeta,\negbeta')\in\nabla_{i,j}(\negalpha)$ and so $\nabla_{i,j}(\negalpha)\neq \emptyset$.

$(5) \Rightarrow (1)$: For any $j\in I\backslash\{i\}$, there exists $\negbeta^j\in \nabla_{i,j}(\negalpha)$, and thus ${\negalpha=\mbox{lub}(\{\negbeta^j \ : \ j\neq i\})}\in \hH(\negQ)$. Now, if $\neggamma\in \nabla_k(\negalpha)$ for some $k\neq i$ , since $\neggamma\neq \negbeta^k$ with $\gamma_k=\beta^k_k=\alpha_k$, it follows from Lemma \ref{teclem} that there exists $\negbeta\in \nabla_i(\negalpha)$, contradicting our assumption. Hence $\nabla_k(\negalpha)=\emptyset$ for every $k\in I$, and it only remains to verify that $\nabla_J(\negalpha)\neq \emptyset$ for $J\subsetneq I$ with $\#J\geq 2$.
For $k,l\in I\backslash\{i\}$, as $\nabla_{i,k}(\negalpha)\neq \emptyset$ and $\nabla_{i,l}(\negalpha)\neq \emptyset$, there exists $\negbeta'\in \nabla_{k,l}(\negalpha)$ by Lemma \ref{teclem}. Notice that for $J\subsetneq I$ with $\#J> 2$, we can write $J=J_1\cup\cdots \cup J_s$, for not necessarily disjoint subsets $J_t\subsetneq J$ with $\#J_t=2$.  Hence, by the previous remark, there exists $\neggamma^t\in \nabla_{J_t}(\negalpha)$, for $t=1,\ldots,s$. Therefore $\negbeta=\mbox{lub}(\{\neggamma^1,\ldots,\neggamma^s\})\in \nabla_J(\negalpha)$, which proves (1) and completes the proof.
\end{proof}

The next result is due to Delgado and appears in \cite{D} as an adaptation of \cite[Thm. 1.5]{DM} for the case of generalized Weierstrass semigroups. According to Delgado, we have the following relation between elements outside $\hH(\negQ)$ and relative maximal elements in $\hH(\negQ)$.

\begin{proposition}[\cite{D}, p. 629]\label{gaps}
Let $\negbeta\in \ZZ^m$ and assume that $q\geq m$. Then $\negbeta\not\in \hH(\negQ)$ if and only if $\negbeta\in \overline{\nabla}_i(\negbeta^*)$ for some $\negbeta^*\in
\hLambda(\negQ)$ and $i\in I$.
\end{proposition}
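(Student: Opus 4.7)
The easy direction is $(\Leftarrow)$. Since $\negbeta^* \in \hLambda(\negQ)$ is in particular maximal, $\nabla(\negbeta^*) = \emptyset$, and therefore $\nabla_i(\negbeta^*) = \overline{\nabla}_i(\negbeta^*) \cap \hH(\negQ) = \emptyset$. Because $\negbeta \in \overline{\nabla}_i(\negbeta^*)$, this immediately forces $\negbeta \notin \hH(\negQ)$.

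For $(\Rightarrow)$, assume $\negbeta \notin \hH(\negQ)$. Proposition \ref{prop:elements} supplies an index $i \in I$ with $\nabla_i^m(\negbeta) = \emptyset$; fix it. The plan is to build $\negbeta^* \in \hLambda(\negQ)$ satisfying $\beta_i^* = \beta_i$ and $\beta_j^* > \beta_j$ for every $j \neq i$, so that $\negbeta \in \overline{\nabla}_i(\negbeta^*)$. For each $j \neq i$, Remark \ref{rmk:2g} shows that the set
\[
S_j := \{ t \in \ZZ : \exists \negalpha \in \hH(\negQ) \text{ with } \alpha_i = \beta_i,\ \alpha_j = t,\ \alpha_k \leq \beta_k \text{ for } k \in I\setminus\{i,j\}\}
\]
is nonempty (take $\alpha_k = \beta_k$ for $k \neq i, j$ and $t$ large enough that Remark \ref{rmk:2g} applies), while Riemann--Roch bounds $S_j$ below. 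Let $\mu_j := \min S_j$ and choose a witness $\negalpha^{(j)} \in \hH(\negQ)$ with $\alpha_i^{(j)} = \beta_i$, $\alpha_j^{(j)} = \mu_j$, and $\alpha_k^{(j)} \leq \beta_k$ for $k \neq i,j$. The hypothesis $\nabla_i^m(\negbeta) = \emptyset$ rules out $t \leq \beta_j$, so $\mu_j > \beta_j$.

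Set $\negbeta^* := \lub(\{\negalpha^{(j)} : j \neq i\})$. A coordinate-by-coordinate check gives $\beta_i^* = \beta_i$ (all $\alpha_i^{(j)} = \beta_i$) and, for $k \neq i$, $\beta_k^* = \mu_k > \beta_k$ (the max over $j \neq i$ is attained at $j = k$, because $\alpha_k^{(j)} \leq \beta_k < \mu_k$ for $j \neq k$). Closure of $\hH(\negQ)$ under $\lub$, a standard consequence of the assumption $q \geq m$, places $\negbeta^*$ in $\hH(\negQ)$.

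It remains to show $\negbeta^* \in \hLambda(\negQ)$, and I will do so via Proposition \ref{relmax}(5) with the same index $i$. Each $\negalpha^{(j)}$ lies in $\nabla_{i,j}(\negbeta^*)$: the equalities $\alpha_i^{(j)} = \beta_i^*$ and $\alpha_j^{(j)} = \beta_j^*$ hold by construction, while $\alpha_k^{(j)} \leq \beta_k < \beta_k^*$ for $k \neq i,j$. The main obstacle I anticipate is the verification $\nabla_i(\negbeta^*) = \emptyset$. If $\negalpha \in \nabla_i(\negbeta^*)$ satisfies $\alpha_k \leq \beta_k$ for every $k \neq i$, then $\negalpha \in \nabla_i^m(\negbeta) = \emptyset$, a contradiction; otherwise the plan is to apply Lemma \ref{teclem} iteratively to $\negalpha$ together with suitable $\negalpha^{(j)}$'s (all of which share the $i$-th coordinate $\beta_i$) in order to reduce the excess $\sum_{k \neq i}\max\{0, \alpha_k - \beta_k\}$, at each stage producing either an element of $\hH(\negQ)$ that contradicts the minimality of some $\mu_j$, or, after finitely many steps, a contradiction with $\nabla_i^m(\negbeta) = \emptyset$. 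Carefully tracking which coordinates of the intermediate elements fall at or below their $\negbeta$-counterparts through successive applications of Lemma \ref{teclem} is the delicate part of the argument.
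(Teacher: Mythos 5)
Your reduction to finding a relative maximal $\negbeta^*$ with $\beta^*_i=\beta_i$ and $\beta^*_j>\beta_j$ for $j\neq i$, and your use of Proposition \ref{relmax}(5) as the target criterion, both match the paper (which routes the statement through Proposition \ref{lem:main}). The backward direction is also fine. However, the forward direction has a genuine gap, and it is not merely the ``delicate part'' you flag at the end: the element $\negbeta^*$ produced by your construction need not satisfy $\nabla_i(\negbeta^*)=\emptyset$, so the statement you are trying to verify about it is actually false in general. The problem is that you minimize the coordinates $\mu_j$ \emph{in parallel}, each time constraining all the remaining coordinates by $\alpha_k\leq\beta_k$. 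The paper instead minimizes \emph{sequentially}: after the $m$-th coordinate has been minimized to $\beta^*_m$, the constraint on that coordinate is relaxed from $\gamma_m\leq\beta_m$ to $\gamma_m<\beta^*_m$ before the $(m-1)$-th coordinate is minimized, and so on. Your feasible sets are strictly smaller, so your $\mu_j$ can exceed the paper's $\beta^*_j$, and the resulting $\lub$ overshoots the relative maximal.

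A concrete counterexample lives in the paper's own running example (the Hermitian curve over $\FF_{16}$, Section 4, with $a=4$, $b=5$, $m=3$). Take $\negbeta=(2,1,1)$ and $i=1$. Every $(2,t,s)$ with $t\leq 6$ and $s\leq 1$ lies in $\overline{\nabla}_1\bigl((2,7,2)\bigr)$ and is therefore a gap, while $(2,7,1)\in\hH(\negQ)$; hence $\mu_2=7$, and by symmetry $\mu_3=7$. Your construction gives $\negbeta^*=\lub\bigl((2,7,1),(2,1,7)\bigr)=(2,7,7)$. But $(2,2,2)$ is an absolute maximal element of $\hH(\negQ)$ with $(2,2,2)\in\nabla_1\bigl((2,7,7)\bigr)$, so $(2,7,7)$ is not even maximal, let alone relative maximal; indeed the relative maximal elements with first coordinate $2$ are exactly the $(2,2+5d_2,2+5d_3)$ with $d_2+d_3=1$. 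The correct outputs here are $(2,2,7)$ or $(2,7,2)$, which is what the sequential minimization produces. This also explains why your proposed rescue via Lemma \ref{teclem} cannot succeed: besides the fact that the conclusion is false for your $\negbeta^*$, applying Lemma \ref{teclem} at the index $i$ where the coordinates agree yields an element with $i$-th coordinate \emph{strictly less} than $\beta_i$, which exits the slice $\{\gamma_i=\beta_i\}$ in which the whole argument must take place. To repair the proof, replace the parallel minimization by the nested one of Proposition \ref{lem:main}, where the already-minimized coordinates enter subsequent steps through the strict bound $\gamma_k<\beta^*_k$.
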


The proof of this result, omitted in \cite{D}, derives from a key result which is interesting enough to be stated separately. In the next result, we present its proof by adapting the notations and concepts presented in \cite[Thm. 1.5]{DM}. This will be important to what we will see later.

\begin{proposition}\label{lem:main}
Assume that $q \geq m$. Let $\negbeta=(\beta_1,\ldots,\beta_m)\in \ZZ^m$ and let $i\in I$. The following statements are equivalent:
\begin{enumerate}[\rm (1)]
\item  $\ell(D_{\negbeta}) = \ell(D_{\negbeta} - Q_i)$;
\item $\negbeta\in \overline{\nabla}_i(\negbeta^*)$ for some $\negbeta^*\in \hLambda (\negQ)$;
\item $\beta_i\neq \max\{\alpha_i \ : \ \negalpha=(\alpha_1,\ldots,\alpha_m)\in \hGamma(\negbeta)\}$.
\end{enumerate}
\end{proposition}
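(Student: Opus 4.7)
The plan is to establish the equivalences in the cyclic order $(1)\Leftrightarrow(3)$, $(2)\Rightarrow(1)$, and $(1)\Rightarrow(2)$.

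For $(1)\Leftrightarrow(3)$, I would first translate (1) via Proposition \ref{prop:elements}(2) into the statement $\nabla_i^m(\negbeta)=\emptyset$. Using the description
$$\hH(\negQ)=\{\lub(\negalpha^1,\ldots,\negalpha^m):\negalpha^k\in\hGamma(\negQ)\},$$
any $\negalpha\in\nabla_i^m(\negbeta)$ is an $\lub$ of absolute maximals $\negalpha^k\leq\negalpha\leq\negbeta$ at least one of which must satisfy $\alpha^k_i=\beta_i$, producing an element of $\hGamma(\negbeta)$ with $i$-th coordinate $\beta_i$. Conversely, an element of $\hGamma(\negbeta)$ with $i$-th coordinate $\beta_i$ sits directly in $\nabla_i^m(\negbeta)$. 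Combined with the trivial bound $\max\{\alpha_i:\negalpha\in\hGamma(\negbeta)\}\leq\beta_i$ (set to $-\infty$ when $\hGamma(\negbeta)$ is empty), this gives the equivalence.

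For $(2)\Rightarrow(1)$, I would invoke the implication $(1)\Rightarrow(2)$ of Proposition \ref{relmax} for $\negbeta^*$, yielding $\nabla(\negbeta^*)=\emptyset$ and in particular $\nabla_i(\negbeta^*)=\emptyset$. Any hypothetical $\negalpha\in\nabla_i^m(\negbeta)$ would satisfy $\alpha_i=\beta_i=\beta^*_i$ and $\alpha_j\leq\beta_j<\beta^*_j$ for $j\neq i$ (the strict inequalities coming from $\negbeta\in\overline{\nabla}_i(\negbeta^*)$), so $\negalpha$ would lie in the empty set $\nabla_i(\negbeta^*)$, a contradiction. Thus $\nabla_i^m(\negbeta)=\emptyset$, which is (1) by Proposition \ref{prop:elements}(2).

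For $(1)\Rightarrow(2)$, the main thrust of the proof, I would construct $\negbeta^*$ explicitly. Consider the set
$$T:=\{\neggamma\in\ZZ^m:\gamma_i=\beta_i,\ \gamma_j\geq\beta_j+1\text{ for }j\neq i,\ \nabla_i(\neggamma)=\emptyset\}.$$
The element $\negbeta+\sum_{j\neq i}\nege_j$ belongs to $T$, because the corresponding $\nabla_i$ equals $\nabla_i^m(\negbeta)$ which is empty by (1). The set $T$ is finite: once any $\gamma_j$ is large enough that some $(\beta_i,\gamma_1-1,\ldots,\widehat{\gamma_i},\ldots,\gamma_m-1)$-type tuple has coordinate sum at least $2g(\cX)$, Remark \ref{rmk:2g} provides an element of $\nabla_i(\neggamma)$. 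Pick $\negbeta^*\in T$ maximal for the coordinate-wise order. For each $j\neq i$, maximality forces $\negbeta^*+\nege_j\notin T$, so $\nabla_i(\negbeta^*+\nege_j)\neq\emptyset$; since $\nabla_i(\negbeta^*)=\emptyset$, any witness $\negalpha^j$ must satisfy $\alpha^j_j=\beta^*_j$, placing $\negalpha^j\in\nabla_{i,j}(\negbeta^*)$. Closure of $\hH(\negQ)$ under $\lub$ (valid for $q\geq m$ and already used in the proof of Proposition \ref{relmax}) gives $\negbeta^*=\lub\{\negalpha^j:j\neq i\}\in\hH(\negQ)$. Proposition \ref{relmax}(5) applied with the distinguished index $i$ then shows $\negbeta^*\in\hLambda(\negQ)$, and by construction $\beta^*_i=\beta_i$ and $\beta^*_j>\beta_j$ for $j\neq i$, so $\negbeta\in\overline{\nabla}_i(\negbeta^*)$.

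The main obstacle is the implication $(1)\Rightarrow(2)$, specifically choosing the right candidate set $T$ so that coordinate-wise maximality simultaneously produces both clauses of Proposition \ref{relmax}(5): the emptiness $\nabla_i(\negbeta^*)=\emptyset$ is built into the definition of $T$, but forcing witnesses in the two-index strata $\nabla_{i,j}(\negbeta^*)$ requires the delicate extraction argument above, using that the witnesses cannot fall into the already empty $\nabla_i(\negbeta^*)$.
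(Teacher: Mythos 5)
Your proposal is correct, and two of the three links are essentially the paper's: the equivalence $(1)\Leftrightarrow(3)$ via $\nabla_i^m(\negbeta)$ and the $\lub$-decomposition into absolute maximals, and $(2)\Rightarrow(1)$ by observing that a witness in $\nabla_i^m(\negbeta)$ would land in the empty set $\nabla_i(\negbeta^*)$. Where you genuinely diverge is the key implication $(1)\Rightarrow(2)$. The paper builds $\negbeta^*$ by a sequential, coordinate-by-coordinate minimization: it enlarges the last coordinate until $\nabla_1^m$ becomes nonempty, takes $\beta^*_m$ as the minimal such value (which simultaneously yields a witness $\neggamma^m\in\nabla_{1,m}^{m-1}$ and the emptiness of the next stratum $\nabla_1^{m-1}$), and iterates down to the second coordinate. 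You instead run a single extremal argument: the set $T$ of tuples $\neggamma$ with $\gamma_i=\beta_i$, $\gamma_j>\beta_j$ and $\nabla_i(\neggamma)=\emptyset$ is nonempty (it contains $\negbeta+\sum_{j\neq i}\nege_j$) and finite (by Remark \ref{rmk:2g}), and a coordinate-wise maximal element $\negbeta^*$ of $T$ automatically has $\nabla_{i,j}(\negbeta^*)\neq\emptyset$ for every $j\neq i$, since the witness for $\nabla_i(\negbeta^*+\nege_j)\neq\emptyset$ cannot drop into the empty $\nabla_i(\negbeta^*)$. Both routes then conclude via Proposition \ref{relmax}(5) and the $\lub$-closure of $\hH(\negQ)$. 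Your version is arguably cleaner to verify, since all the strata conditions are produced at once by maximality rather than tracked through an induction; the paper's version has the mild advantage of being constructive in a greedy, one-coordinate-at-a-time sense and of mirroring the original argument of Delgado--\cite{DM}. The one point worth spelling out if you write this up is the finiteness of $T$, which you correctly reduce to Remark \ref{rmk:2g}: for $\neggamma\in T$ one must have $\beta_i+\sum_{j\neq i}(\gamma_j-1)<2g(\cX)$, which together with the lower bounds $\gamma_j\geq\beta_j+1$ bounds every coordinate.
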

\begin{proof} For $\negbeta=(\beta_1,\ldots,\beta_m)\in \ZZ^m$ and $i,j\in I$, let us consider the following set
$$\nabla_i^j(\negbeta):=\{\neggamma\in \hH(\negQ) \ : \ \gamma_i=\beta_i, \gamma_k\leq \beta_k \ \mbox{for } 1\leq k\leq j, \mbox{and } \gamma_s<\beta_s \ \mbox{for } s>j\}.$$
(1)$\Leftrightarrow$(2) By the maximality of $\negbeta^*$ we have that (2) implies (1). Now, let us assume, without loss of generality, that  $\nabla_1^m(\negbeta)=\emptyset$. First, note that there exists an integer $\beta'_m$ such that ${\nabla_1^m(\beta_1,\ldots,\beta_{m-1},\beta'_m)\neq \emptyset}$ (by Remark~\ref{rmk:2g}, it is enough to take $\beta'_m>2g(\cX)-\beta_1-\cdots-\beta_{m-1}$). Set $$\beta^*_m=\min\{\gamma_m \ : \ \neggamma=(\gamma_1,\ldots,\gamma_m)\in\nabla_1^m(\beta_1,\ldots,\beta_{m-1},\beta'_m)\}.$$
Now, let $\neggamma^m\in\nabla_1^m(\beta_1,\ldots,\beta_{m-1},\beta'_m)$ be such that $\gamma^m_m=\beta^*_m$, and set $\negbeta^m=(\beta_1,\ldots,\beta_{m-1},\beta^*_m)\in~\ZZ^m$. Note that
\begin{itemize}
\item $\beta_m<\beta^*_m$, since otherwise we would have $\neggamma^m\in \nabla^m_1(\negbeta)=\emptyset$;
\item $\neggamma^m\in \nabla_{1,m}^{m-1}(\negbeta^m)$, as $\neggamma^m\in\nabla_1^m(\beta_1,\ldots,\beta_{m-1},\beta'_m)$ and $\gamma^m_m=\beta^*_m$;
\item $\nabla_1^{m-1}(\negbeta^m)=\emptyset$, because of the minimality of $\beta^*_m$.
\end{itemize}
Again, it follows from Remark \ref{rmk:2g} that there exists an integer $\beta'_{m-1}$ in such a way that ${\nabla_1^{m-1}(\beta_1,\ldots,\beta_{m-2},\beta'_{m-1},\beta^*_m)\neq \emptyset}$. Set $$\beta^*_{m-1}=\min\{\gamma_{m-1} \ : \ \neggamma=(\gamma_1,\ldots,\gamma_m)\in\nabla_1^{m-1}(\beta_1,\ldots,\beta_{m-2},\beta'_{m-1},\beta^*_m)\}.$$
Let $\neggamma^{m-1}\in\nabla_1^m(\beta_1,\ldots,\beta_{m-2},\beta'_{m-1},\beta^*_m)$ such that $\gamma^{m-1}_{m-1}=\beta^*_{m-1}$. Let also $\negbeta^{m-1}=(\beta_1,\ldots,\beta_{m-2},\beta^*_{m-1},\beta^*_m)\in \ZZ^m$. For the same reasons as before we have
\begin{itemize}
\item $\beta_{m-1}<\beta^*_{m-1}$ and $\neggamma^{m-1}\in \nabla_{1,m-1}^{m-2}(\negbeta^{m-1})$;
\item $\nabla_1^{m-2}(\negbeta^{m-1})=\emptyset$.
\end{itemize}
Repeated application of this argument enables us to obtain $\beta^*_2,\ldots,\beta^*_m\in \ZZ$ and $\neggamma^2,\ldots,\neggamma^m\in~\hH(\negQ)$ such that, writing $\negbeta^*=(\beta_1,\beta^*_2,\ldots,\beta^*_m)\in \ZZ^m$, satisfy:
\begin{itemize}
\item $\beta_{j}<\beta^*_{j}$ and $\neggamma^{j}\in \nabla_{1,j}(\negbeta^*)$ for $j=2,\ldots,m$;
\item $\nabla_1(\negbeta^*)=\emptyset$.
\end{itemize}
It follows from Proposition \ref{relmax}(5) that $\negbeta^*$ is a relative maximal element of $\hH(\negQ)$, and we thus have $\negbeta\in \overline{\nabla}_1(\negbeta^*)$.

(1)$\Leftrightarrow$(3) If $\beta_i= \alpha_i$ for some $\negalpha\in \hGamma(\negbeta)$, it follows from Proposition \ref{prop:elements}(2) that $\negalpha\in \nabla^m_i(\negbeta)$, contradicting $\ell(D_\negbeta)=\ell(D_\negbeta-Q_i)$. Conversely, if $\ell(D_\negbeta)\neq\ell(D_\negbeta-Q_i)$, then $\nabla^m_i(\negbeta)\neq \emptyset$ by Proposition \ref{prop:elements}, and thus there exists $\negalpha\in \nabla^m_i(\negbeta)\cap \hGamma(\negQ)$, which implies that $\negalpha\in \hGamma(\negbeta)$ with $\alpha_i=\beta_i$, contrary to the assumption.
\end{proof}

We note that Propositions \ref{prop:elements}(2) and \ref{lem:main} yield equivalences to the condition $\ell(D_\negalpha)=\ell(D_\negalpha-Q_i)$ for $i\in I$; in the sequence we explore the consequences of these relations in order to establish connections with gaps and pure gaps at several points.

\section{A characterization of Weierstrass gaps at several points}

We now turn back to the proposal of providing a characterization of the Weierstrass gaps at several points in terms of a finite set of special elements. For this, we shall consider a curve $\cX$ over $\FF_q$ and $\negQ=(Q_1,\ldots,Q_m)$ under the same assumptions as before.

In a sequence of papers \cite{K,H, HK}, Homma and Kim furnished a description of $H(Q_1,Q_2)$ and $G(Q_1,Q_2)$, as well as $G_0(Q_1,Q_2)$, through the following key observation: setting $\sigma(\alpha):=\min\{\beta \ : \ (\alpha,\beta)\in H(Q_1,Q_2)\}$ for $\alpha\in G(Q_1)$, we have that $\sigma(\alpha)\in G(Q_2)$. Moreover, they showed that the assignment $\sigma$ provides a bijection between the Weierstrass gap sets $G(Q_1)$ and $G(Q_2)$ so that $\sigma$ can be regarded as a permutation of $\{1,\ldots,g\}$. Writing $\ell_1< \cdots < \ell_g$ and $\ell'_1< \cdots < \ell'_g$ the gap sequences at $Q_1$ and $Q_2$, respectively, let us consider the fundamental set
\begin{equation}\label{genset2}
\Gamma(Q_1,Q_2):=\{(\ell_i,\ell'_{\sigma(i)}) \ : \ i=1,\ldots,g\}.
\end{equation}
The set $\Gamma(Q_1,Q_2)$ was used in \cite{K} to yield a way of describing the elements of $H(Q_1,Q_2)$, and this approach was generalized later by Matthews \cite{Ma} to the general case, the so-called minimal generating sets of classical Weierstrass semigroups.

Regarding the Weierstrass gaps at $(Q_1,Q_2)$, it follows from the properties of $\sigma$ that the sets of gaps and pure gaps at $(Q_1,Q_2)$ are related to $\Gamma(Q_1,Q_2)$ as follows:
$$G(Q_1,Q_2)=\bigcup_{i=1}^g\left(\{(\ell_i,\beta) \ : \ 0\leq \beta< \ell'_{\sigma(i)}\}\cup \{(\beta,\ell'_{\sigma(i)}) \ : \ 0\leq \beta< \ell_i\}\right)$$
and
$$G_0(Q_1,Q_2)=\{(\ell_i,\ell'_j) \ : (i,\sigma^{-1}(j))\in R(\sigma)\},$$
where $R(\sigma)=\{(i,j) \ : \ i<j \ \mbox{and } \sigma(i)>\sigma(j)\}$. Notice that in terms of the settled notation in the previous section, we identify the sets
$$\{(\ell_i,\beta) \ : \ 0\leq \beta< \ell'_{\sigma(i)}\}=\overline{\nabla}_1(\ell_i,\ell'_{\sigma(i)})\cap \NN^2_0$$
and
$$\{(\beta,\ell'_{\sigma(i)}) \ : \ 0\leq \beta< \ell_i\}=\overline{\nabla}_2(\ell_i,\ell'_{\sigma(i)})\cap \NN^2_0,$$
which lead us to rewrite the sets of gaps and pure gaps at $(Q_1,Q_2)$ as
\begin{equation}\label{eq:gaps2}
G(Q_1,Q_2)=\bigcup_{i=1}^g (\overline{\nabla}(\ell_i,\ell'_{\sigma(i)})\cap \NN^2_0)
\end{equation}
and 
\begin{equation}\label{eq:puregaps2}
G_0(Q_1,Q_2)=\bigcup_{(i,j)}\left(\overline{\nabla}_1(\ell_i,\ell'_{\sigma(i)})\cap \overline{\nabla}_2(\ell_j,\ell'_{\sigma(j)}) \right).
\end{equation}
Observe furthermore that $\Gamma(Q_1,Q_2)$ is related with the notion of maximality in Definition~\ref{def:maximals}. By the properties of $\sigma$, the elements $(\ell_i,\ell'_{\sigma(i)})$, for $i=1,\ldots,g$, are exactly the maximal elements of $\hH(Q_1,Q_2)$ contained in $H(Q_1,Q_2)$. Building on these observations together with Lemma \ref{lem:main}, we state characterizations for Weierstrass gaps and pure gaps which do not seem to have been noticed previously. For this, let us consider $\Lambda(\negQ):=\hLambda(\negQ)\cap \NN_0^m$.
In the following we extend the description \eqref{eq:gaps2} of Weierstrass gaps at pairs given in \cite{HK} to the general case in terms of relative maximal elements of $\hH(\negQ)$ as follows.

\begin{theorem}\label{gaps} Assume that $q\geq m$. Then
$$G(\negQ)=\bigcup_{\negbeta^*\in\Lambda(\negQ)} (\overline{\nabla}(\negbeta^*)\cap \NN_0^m).$$
\end{theorem}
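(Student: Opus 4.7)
The strategy is to reduce the claim to the Delgado-type description of $\ZZ^m\setminus\hH(\negQ)$ recorded just above (the proposition attributed to Delgado), and then to restrict carefully to the non-negative orthant.

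First I would record the identification $H(\negQ)=\hH(\negQ)\cap\NN_0^m$: a nonzero rational function with $\divvp(h)=\sum\alpha_iQ_i$ and $\alpha_i\geq 0$ is precisely a nonzero element of $R_\negQ$ whose valuation at each $Q_i$ equals $-\alpha_i\leq 0$. Consequently $G(\negQ)=\NN_0^m\setminus\hH(\negQ)$, so it suffices to describe $\NN_0^m\setminus\hH(\negQ)$ in terms of the neighborhoods $\overline{\nabla}$ of relative maximal elements that lie in $\NN_0^m$.

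Next I would invoke the preceding Delgado-type proposition: $\negbeta\notin\hH(\negQ)$ if and only if $\negbeta\in\overline{\nabla}_i(\negbeta^*)$ for some $\negbeta^*\in\hLambda(\negQ)$ and some $i\in I$, equivalently $\negbeta\in\overline{\nabla}(\negbeta^*)$ for some $\negbeta^*\in\hLambda(\negQ)$. Intersecting with $\NN_0^m$ yields both inclusions at once: every gap $\negbeta\in G(\negQ)$ belongs to some $\overline{\nabla}(\negbeta^*)\cap\NN_0^m$, and conversely every element of such a set is in $\NN_0^m$ but outside $\hH(\negQ)$, hence a gap.

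The final step, which is the only genuinely new input beyond citing the proposition, is to check that the witnessing $\negbeta^*$ can always be taken in $\Lambda(\negQ)=\hLambda(\negQ)\cap\NN_0^m$ rather than merely in $\hLambda(\negQ)$. If $\negbeta\in\overline{\nabla}_i(\negbeta^*)\cap\NN_0^m$, then by the very definition of $\overline{\nabla}_i$ one has $\beta^*_i=\beta_i\geq 0$ and $\beta^*_j>\beta_j\geq 0$ for every $j\neq i$, so $\negbeta^*\in\NN_0^m$ automatically. This positivity observation is the only point worth writing carefully; the equality claimed in the theorem follows immediately from it together with the Delgado-type proposition. There is no substantive obstacle, since all the heavy lifting has already been done by the equivalences established in Proposition \ref{relmax} and Proposition \ref{lem:main}.
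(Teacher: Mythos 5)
Your argument is correct and follows essentially the same route as the paper: the paper passes directly through Proposition \ref{lem:main} (gap at $\negalpha$ iff $\ell(D_\negalpha)=\ell(D_\negalpha-Q_j)$ for some $j$ iff $\negalpha\in\overline{\nabla}_j(\negbeta^*)$ for some relative maximal $\negbeta^*$), whereas you route through the equivalent Delgado-type proposition, and your explicit check that the witness $\negbeta^*$ automatically lies in $\NN_0^m$ (so that one may index the union by $\Lambda(\negQ)$ rather than $\hLambda(\negQ)$) is a detail the paper leaves implicit. The only quibble is that your one-line justification of $H(\negQ)=\hH(\negQ)\cap\NN_0^m$ overlooks that a witness $h$ for $\negalpha\in H(\negQ)$ may vanish at some $Q_i$ with $\alpha_i=0$; the identification still holds for $q\geq m$, but via Proposition \ref{prop:elements}(1) together with the Riemann--Roch characterization of $H(\negQ)$ quoted in the Introduction, not by reading off valuations from that particular $h$.
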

\begin{proof} As $\negalpha \in \NN^m_0$ belongs to $G(\negQ)$ if and only if $\ell(D_\negalpha)=\ell(D_\negalpha-Q_j)$ for some $j\in I$, it is equivalent to $\negalpha\in \overline{\nabla}(\negbeta^*)=\bigcup_{j=1}^m \overline{\nabla}_j(\negbeta^*)$ for some $\negbeta^*\in \hLambda(\negQ)$ by Proposition \ref{lem:main}.
\end{proof}

Employing the same idea as before, we have an expanded characterization of pure gaps at several points, generalizing the account \eqref{eq:puregaps2} given in \cite{HK}.

\begin{theorem}\label{puregaps} Assume that $q\geq m$. Then
$$G_0(\negQ)=\bigcup_{\scriptsize{(\negbeta^1,\ldots,\negbeta^m)\in\Lambda(\negQ)^m}}\left(\bigcap_{i=1}^m \overline{\nabla}_i(\negbeta^i)\right).$$
\end{theorem}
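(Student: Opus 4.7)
My plan is to parallel the proof of Theorem~\ref{gaps}, replacing ``for some $j$'' with ``for every $j$'' and the resulting union by an intersection. By definition, $\negalpha \in \NN_0^m$ belongs to $G_0(\negQ)$ if and only if $\ell(D_\negalpha) = \ell(D_\negalpha - Q_j)$ for every $j \in I$. Applying Proposition~\ref{lem:main} to each index $j$ yields, for every $j$, some $\negbeta^j \in \hLambda(\negQ)$ such that $\negalpha \in \overline{\nabla}_j(\negbeta^j)$. Collecting these $m$ witnesses into the tuple $(\negbeta^1,\ldots,\negbeta^m)$ places $\negalpha$ in $\bigcap_{i=1}^m \overline{\nabla}_i(\negbeta^i)$, which would prove the $\subseteq$ inclusion once we verify that each $\negbeta^j$ can actually be taken in $\Lambda(\negQ)=\hLambda(\negQ)\cap \NN_0^m$ rather than only in $\hLambda(\negQ)$.

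The nonnegativity check is immediate from the definition of $\overline{\nabla}_j$: the relation $\negalpha \in \overline{\nabla}_j(\negbeta^j)$ gives $\beta^j_j = \alpha_j \geq 0$ and $\beta^j_i > \alpha_i \geq 0$ for $i \neq j$, so every coordinate of $\negbeta^j$ is nonnegative. Hence $\negbeta^j \in \Lambda(\negQ)$, as required.

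For the reverse inclusion, suppose $\negalpha \in \bigcap_{i=1}^m \overline{\nabla}_i(\negbeta^i)$ for some $(\negbeta^1,\ldots,\negbeta^m) \in \Lambda(\negQ)^m$. The identity $\alpha_j = \beta^j_j \geq 0$ for each $j$ shows $\negalpha \in \NN_0^m$, and Proposition~\ref{lem:main} (in the direction $(2)\Rightarrow(1)$) gives $\ell(D_\negalpha)=\ell(D_\negalpha - Q_j)$ for every $j \in I$. By Proposition~\ref{prop:elements}(1), this forces $\negalpha \notin \hH(\negQ)$, and together with $\negalpha \in \NN_0^m$ it yields $\negalpha \in G_0(\negQ)$ by the very definition of pure gap.

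I do not anticipate any substantive obstacle: the hard content has been packaged into Proposition~\ref{lem:main}, and the proof amounts to applying it simultaneously at all $m$ coordinates. The only subtlety worth mentioning is the nonnegativity verification that promotes the relative maximal witnesses from $\hLambda(\negQ)$ to $\Lambda(\negQ)$, which is trivial once the definition of $\overline{\nabla}_j$ is unpacked.
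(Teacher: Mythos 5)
Your proof is correct and follows essentially the same route as the paper: both reduce the statement to applying Proposition~\ref{lem:main} simultaneously at every index $j\in I$. You additionally spell out the nonnegativity check showing the witnesses $\negbeta^j$ lie in $\Lambda(\negQ)$ rather than merely in $\hLambda(\negQ)$, a detail the paper's one-line proof leaves implicit.
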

\begin{proof}
By Proposition \ref{lem:main}, $\ell(D_\negalpha)=\ell(D_\negalpha-Q_j)$ for all $j\in I$, if and only if $\negalpha\in\bigcap_{i=1}^m \overline{\nabla}_i(\negbeta^i)$ for ${\negbeta^1,\ldots,\negbeta^m\in \hLambda(\negQ)}$.
\end{proof}

\begin{remark}\label{rmk:puregaps} Assuming that $q \geq m$, we notice that an intersection $\bigcap_{i=1}^m \overline{\nabla}_i(\negbeta^i)$, for ${\negbeta^1,\ldots,\negbeta^m\in \hLambda(\negQ)}$ as in Theorem \ref{puregaps}, is either empty or a singleton consisting of a pure gap, namely $(\beta^1_1,\ldots,\beta^m_m)$. Consequently $$\bigcap_{i=1}^m \overline{\nabla}_i(\negbeta^i)\neq \emptyset$$
if and only if, for each $i\in I$,
$$\beta^i_i<\beta^j_i \quad \mbox{for all} \quad j\in I\backslash\{i\}.$$
It thus yields a procedure to determine such intersections, and therefore the whole set of classical pure gaps. Observe furthermore that
\begin{equation*}\label{glb}
G_0(\negQ)\subseteq\{\mbox{glb}(\negbeta^1,\ldots,\negbeta^m) \ : \ \negbeta^1,\ldots,\negbeta^m \in \Lambda(\negQ)\},
\end{equation*}
where
$$\mbox{glb}(\negbeta^1,\ldots,\negbeta^m):=(\min\{\beta^1_1,\ldots,\beta^m_1\},\ldots,\min\{\beta^1_m,\ldots,\beta^m_m\})\in \NN_0^m.$$
\end{remark}





\section{Gaps and pure gaps in certain curves with separated variables}
Let $\cX_{f,g}$ be a curve over $\Fq$ admitting a plane model of type
$$f(y)=g(x),$$
where $f(T), g(T)\in \mathbb{F}_q[T]$ with $\deg(f(T))=a $ and $\deg(g(T))=b$ satisfying $\mbox{gcd}(a,b)=1$. Suppose that $\cX_{f,g}$ has genus $(a-1)(b-1)/2$ and is geometrically irreducible. Suppose moreover that there exist $a+1$ distinct rational points $P_1, P_2, \ldots, P_{a+1}$ on $\cX_{f,g}$ such that
\begin{equation}\label{eq1}
a P_1 \sim P_2 + \cdots + P_{a+1}
\end{equation}
and
\begin{equation}\label{eq2}
\ b P_1 \sim b P_{j} \ \mbox{ for } j\in \{2,\ldots,a+1\},
\end{equation}
where $b$ is the smallest positive integer satisfying (\ref{eq2}) and ``$\sim$" represents the linear equivalence of divisors. Notice that, by the above assumptions, $H(P_1)=\langle a,b\rangle$.\\

Considering the region $\cC(\negP_m)$, the assumption \eqref{eq2} yields
$$\cC(\negP_m)=\{\negbeta\in \ZZ^m \ : \ 0\leq \beta_i<b \ \mbox{for } i=2,\ldots,m\}.$$

\begin{theorem} [Theorem 4.2, \cite{TT}] \label{absolute maximals} Let $P_1,\ldots,P_{a+1}$ be rational points on $\cX_{f,g}$ and $\cC(\negP_m)$ be as above. For $2\leq m\leq  a+1$, let
$$\negalpha^{i,m}=(a(b-i)-b(m-1),i,\ldots,i)\in \ZZ^m.$$
Then  $$\hGamma(\mathbf{P}_m)\cap \cC(\negP_m)=\{\negalpha^{i,m} \ : \ i=1,\ldots,b-1\}\cup \{{\bf 0}\}.$$
\end{theorem}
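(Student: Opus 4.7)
The plan is to verify the two inclusions of the claimed equality. For the $\supseteq$ direction, I produce explicit witnesses. Using (\ref{eq1}), choose $\phi_1 \in \Fq(\cX)$ with $\divv(\phi_1) = P_2 + \cdots + P_{a+1} - aP_1$, and using (\ref{eq2}), choose $\phi_j \in \Fq(\cX)$ with $\divv(\phi_j) = bP_1 - bP_j$ for $j = 2, \ldots, a+1$. Setting $h_{i,m} := \phi_1^{b-i}\phi_2 \cdots \phi_m$ for $i = 1, \ldots, b-1$, a direct divisor computation yields $-v_{P_1}(h_{i,m}) = a(b-i) - b(m-1)$, $-v_{P_j}(h_{i,m}) = i$ for $j \in \{2, \ldots, m\}$, and $v_{P_j}(h_{i,m}) = b - i \geq 1$ for $j \in \{m+1, \ldots, a+1\}$, showing $h_{i,m} \in R_{\negP_m}$ and hence $\negalpha^{i,m} \in \hH(\negP_m) \cap \cC(\negP_m)$. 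The element $\negcero$ is witnessed by the constant function $1$.

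To verify absolute maximality of each $\negalpha^{i,m}$, I would proceed via Proposition~\ref{prop:elements}(2), translating each condition $\nabla_J(\negalpha^{i,m}) = \emptyset$ (for $J \subsetneq I$ nonempty) into a Riemann-Roch dimension identity and verifying it directly. Concretely, I argue by contradiction: if $\neggamma \in \nabla_J(\negalpha^{i,m})$ is realized by some $\tilde h \in R_{\negP_m}$, a judicious product of $\tilde h$ with integer powers of the $\phi_k$'s designed to cancel poles at the $P_k$'s with $k \notin J$ yields a rational function whose pole order at $P_1$ falls outside $H(P_1) = \langle a, b \rangle$. The minimality of $b$ in (\ref{eq2})---which forbids any cancellation of sub-$b$ poles at the other $P_k$'s---is crucial to making the contradiction go through. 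The multi-element cases $|J| \geq 2$ can then be derived from the singletons via the lub description $\hH(\negP_m) = \{\lub(\negalpha^1, \ldots, \negalpha^m) : \negalpha^k \in \hGamma(\negP_m)\}$ recalled before Theorem~\ref{maximals}.

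For the $\subseteq$ direction, I would argue by induction on $m$. The base case $m = 2$ reduces to a careful analysis of pairs using $H(P_1) = \langle a, b \rangle$ together with the Homma-Kim correspondence \eqref{genset2}. In the inductive step, given $\negalpha \in \hGamma(\negP_m) \cap \cC(\negP_m)$, the combination of the inductive hypothesis applied to appropriate lower-dimensional restrictions, the $\Theta$-coset structure of Theorem~\ref{maximals}, and the normalization $0 \leq \alpha_j < b$ for $j \geq 2$ forces the last $m-1$ coordinates of $\negalpha$ to coincide at some $i \in \{0, 1, \ldots, b-1\}$; maximality then pins down $\alpha_1$ uniquely and matches $\negalpha$ to one of the claimed elements. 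The main technical obstacle is the singleton step of the absolute-maximality argument, specifically $\nabla_1(\negalpha^{i,m}) = \emptyset$, which requires delicately combining the minimum-pole-order behaviour at $P_1$ dictated by $H(P_1) = \langle a, b \rangle$ with the rigidity imposed by (\ref{eq2}); the remaining pieces---other singletons, the multi-element assembly via lubs, and the coordinate-coincidence argument---should then follow through relatively standard manipulations.
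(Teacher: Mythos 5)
This statement is imported verbatim from \cite{TT} (it is Theorem~4.2 there) and the present paper gives no proof of it, so there is no in-paper argument to compare against; I will therefore assess your proposal on its own terms. The explicit-function part is correct and is the standard first step: with $\divv(\phi_1)=\sum_{k=2}^{a+1}P_k-aP_1$ and $\divv(\phi_j)=bP_1-bP_j$, the product $\phi_1^{b-i}\phi_2\cdots\phi_m$ indeed realizes $\negalpha^{i,m}$ as an element of $\hH(\negP_m)\cap\cC(\negP_m)$. Everything after that, however, is a plan rather than a proof, and the two mechanisms you propose for the hard steps do not work as described. First, the ``judicious product'' contradiction for maximality fails in its most natural instantiation: if $\neggamma\in\nabla_1(\negalpha^{i,m})$ were realized by $\tilde h$, then $\tilde h\cdot\phi_2\cdots\phi_m$ has pole divisor exactly $a(b-i)P_1$ (the coordinates $\gamma_k\leq i-1<b$ become $\gamma_k-b<0$, and $\gamma_1+b(m-1)=a(b-i)$), and $a(b-i)\in\langle a,b\rangle=H(P_1)$ --- so no contradiction arises from the pole order at $P_1$. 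Ruling out such functions genuinely requires a dimension count (e.g.\ Riemann--Roch together with Noether's Reduction Lemma~\ref{lemma noether} applied to the canonical divisor $(ab-a-b-1)P_1$, which is exactly how the companion Theorem~\ref{relativemaximals} is proved in this paper), not an arithmetic argument about $H(P_1)$ alone.

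Second, the claim that the cases $\#J\geq 2$ ``can be derived from the singletons via the lub description'' is both circular and false. It is circular because the description $\hH(\negP_m)=\{\lub(\negalpha^1,\ldots,\negalpha^m):\negalpha^k\in\hGamma(\negP_m)\}$ presupposes knowledge of $\hGamma(\negP_m)$, which is precisely what is being determined. It is false as an implication because the sets $\overline{\nabla}_J(\negalpha)$ for distinct $J$ are pairwise disjoint and emptiness of the singleton sets says nothing about the larger ones: the relative maximal elements $\negbeta^{i,m}$ of Theorem~\ref{relativemaximals} satisfy $\nabla_k(\negbeta^{i,m})=\emptyset$ for every $k$ yet $\nabla_J(\negbeta^{i,m})\neq\emptyset$ for every $J$ with $\#J\geq 2$, so they are maximal but not absolute maximal. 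Finally, the inclusion $\subseteq$ is left entirely to an unspecified induction; you give no mechanism relating $\hGamma(\negP_m)$ to $\hGamma(\negP_{m-1})$, and without either such a projection lemma or an a priori bound on $\#(\hGamma(\negP_m)\cap\cC(\negP_m))$ (a counting argument of the latter kind is what closes the analogous proof of Theorem~\ref{relativemaximals}), the reverse inclusion remains unproved.
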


The previous result gave us a way to find the elements in the generalized Weierstrass semigroup $\hH(\negP_m)$ and consequently the Riemann-Roch spaces of divisors supported in $\{P_1,\ldots,P_{a+1}\}$; see \cite{TT} for details. In the next result, we will explicit the relative maximal elements of $\hH(\negP_m)$  in the region $\cC(\negP_m)$. According to Theorem \ref{maximals}, these elements determine all relative maximal elements of $\hH(\negP_m)$ and in particular the gaps and pure gaps at $\negP_m$.


\begin{theorem}\label{relativemaximals}
Let $P_1,\ldots,P_{a+1}$ be rational points on $\cX_{f,g}$ and $\cC(\negP_m)$ be as above. For ${2\leq m\leq a+1}$, let
$$\begin{array}{rcll}
\negbeta^{0,m} & = & \left(b(m-2),0,\ldots,0\right), & \mbox{and} \\
\negbeta^{i,m} & = & \left(a(b-i)-b,i,\ldots,i\right) & \mbox{for }i=1,\ldots,b-1.
\end{array}
$$
Then
$$\hLambda(\negP_m)\cap \cC(\negP_m)=\{\negbeta^{i,m} \ : \ i=0,1,\ldots,b-1\}.$$
\end{theorem}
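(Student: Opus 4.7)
The plan is to verify, for each $i\in\{0,1,\ldots,b-1\}$, the relative maximality of $\negbeta^{i,m}$ via the equivalence $(1)\Leftrightarrow(5)$ of Proposition~\ref{relmax} with distinguished index equal to $1$. Concretely, I would check that $\nabla_1(\negbeta^{i,m})=\emptyset$ and $\nabla_{1,j}(\negbeta^{i,m})\ne\emptyset$ for every $j\in\{2,\ldots,m\}$, and then separately argue that no other element of $\cC(\negP_m)$ can be relative maximal.

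The non-emptiness of each $\nabla_{1,j}(\negbeta^{i,m})$ would be handled by explicit construction. Assumption~(\ref{eq2}) provides, for each $j\in\{2,\ldots,a+1\}$, a function $u_j\in\FF_q(\cX_{f,g})^\times$ with $\divv(u_j)=b(P_j-P_1)$. Taking $h_i\in R_{\negP_m}$ to be a function realizing the absolute maximal $\negalpha^{i,m}$ (supplied by Theorem~\ref{absolute maximals}) when $i\geq 1$, and setting $h_0\equiv 1$ otherwise, a direct valuation computation shows that $h_i\prod_{k\in\{2,\ldots,m\}\setminus\{j\}}u_k$ yields an element $\neggamma^{(j)}\in\hH(\negP_m)$ whose first coordinate equals $\beta^{i,m}_1$, whose $j$-th coordinate equals $i=\beta^{i,m}_j$, and whose remaining coordinates equal $i-b<i$. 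Hence $\neggamma^{(j)}\in\nabla_{1,j}(\negbeta^{i,m})$, and the identity $\negbeta^{i,m}=\lub(\neggamma^{(2)},\ldots,\neggamma^{(m)})$ simultaneously certifies $\negbeta^{i,m}\in\hH(\negP_m)$.

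The delicate step is to prove $\nabla_1(\negbeta^{i,m})=\emptyset$. By Proposition~\ref{prop:elements}(2) this reduces to the dimension identity $\ell(E)=\ell(E-P_1)$ with $E=D_{\negbeta^{i,m}-\mathbf{1}+\mathbf{e}_1}$. I would attack this by exploiting Theorem~\ref{maximals} together with Theorem~\ref{absolute maximals}: any hypothetical $\neggamma\in\nabla_1(\negbeta^{i,m})$ would, via the $\lub$-decomposition of $\hH(\negP_m)$ recalled at the end of Section~2, arise as the $\lub$ of finitely many shifted absolute maximals $\negalpha^{l,m}+\negt$ with $\negt\in\Theta(\negP_m)$. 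A case analysis on which contributors determine the first-coordinate maximum, combined with the structure of $\Theta(\negP_m)$ (whose generators are supported on pairs of consecutive coordinates and tied to $b$ by the relation~(\ref{eq2})) and the arithmetic input $\gcd(a,b)=1$, should force at least one coordinate $k\geq 2$ of $\neggamma$ to reach the value $i$, contradicting the hypothesis $\gamma_k<i$. I expect this combinatorial analysis to be the principal obstacle in the proof.

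For uniqueness, given $\negbeta\in\hLambda(\negP_m)\cap\cC(\negP_m)$, Proposition~\ref{relmax}(5) supplies an index $i\in I$ with $\nabla_i(\negbeta)=\emptyset$ and $\nabla_{i,j}(\negbeta)\ne\emptyset$ for every $j\ne i$. Combining Proposition~\ref{lem:main}(3) with the explicit list of absolute maximals in $\cC(\negP_m)$ from Theorem~\ref{absolute maximals}---all of which have equal non-first coordinates---one concludes that the non-first coordinates of $\negbeta$ must coincide, say $\beta_2=\cdots=\beta_m=l$ for some $l\in\{0,\ldots,b-1\}$. The residual constraints on $\beta_1$ coming from the pair of conditions $\nabla_1(\negbeta)=\emptyset$ and $\nabla_{1,j}(\negbeta)\ne\emptyset$ then pin it down to the value prescribed by $\negbeta^{l,m}$, completing the proof.
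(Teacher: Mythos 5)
Your outline has the right skeleton, and the membership half is essentially complete: the functions $h_i\prod_{k\neq j}u_k$ you exhibit coincide (up to the choice of $h_i$) with the functions $h^{b-i}/g_j$ used in the paper, and they do produce elements of $\nabla_{1,j}(\negbeta^{i,m})$ whose $\lub$ is $\negbeta^{i,m}$. The genuine gap is the step you yourself flag as ``the principal obstacle'': you never prove $\nabla_1(\negbeta^{i,m})=\emptyset$, only describe a case analysis that ``should'' work. Since Proposition \ref{relmax}(5) requires \emph{both} conditions, nothing is established without it. For the record, your route can be completed: by Propositions \ref{prop:elements}(2) and \ref{lem:main}, $\nabla_1(\negbeta^{i,m})\neq\emptyset$ would force an absolute maximal $\negalpha=\negalpha^{l,m}+\negt$ with $\negt=(-b\sum_{j} d_j,bd_2,\ldots,bd_m)$, $\alpha_1=a(b-i)-b$ and $\alpha_k\leq i-1$ for $k\geq 2$; the first condition gives $a(i-l)=b\bigl(m-2+\sum_j d_j\bigr)$, whence $l=i$ and $\sum_j d_j=2-m$ by $\gcd(a,b)=1$, while the second forces $d_k\leq -1$ for all $k$, i.e.\ $\sum_j d_j\leq -(m-1)<2-m$, a contradiction (the case $\negbeta^{0,m}$ is analogous). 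But this computation must actually be carried out. The paper avoids it altogether by verifying the discrepancy criterion of Proposition \ref{relmax}(3) directly: it proves $\cL(A-P_1)=\cL(A-P_1-P_j)$ for $A=D_{\negbeta^{i,m}-\mathbf{1}+\mathbf{e}_1+\mathbf{e}_j}$ via Noether's Reduction Lemma \ref{lemma noether}, using the canonical divisor $K=(ab-a-b-1)P_1$ and the explicit function $h^{i-1}\in\cL(K-A+P_1+P_j)\setminus\cL(K-A+P_j)$.

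The uniqueness half has a second, unacknowledged gap. You correctly reduce to $\beta_2=\cdots=\beta_m=l$, but then merely assert that $\nabla_1(\negbeta)=\emptyset$ together with $\nabla_{1,j}(\negbeta)\neq\emptyset$ ``pins down'' $\beta_1$; that is exactly what must be proved, and it is not evident a priori that only one value of $\beta_1$ is compatible with these conditions. The paper closes this differently, by counting: the assignment sending $\negbeta\in\hLambda(\negP_m)\cap\cC(\negP_m)$ to the unique $\negalpha^{i,m}\in\nabla_{I\setminus\{1\}}(\negbeta)$ yields $\#(\hLambda(\negP_m)\cap\cC(\negP_m))\leq\#(\hGamma(\negP_m)\cap\cC(\negP_m))$, which equals the cardinality of $\{\negbeta^{i,m} : i=0,\ldots,b-1\}$; since the first half already gives the inclusion of this set into $\hLambda(\negP_m)\cap\cC(\negP_m)$, equality follows. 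You should either adopt this counting step or carry out the explicit determination of the maximum characterizing $\beta_1$ via Proposition \ref{lem:main}(3); as written, both halves of your argument stop short of a proof precisely at their decisive points.
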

\begin{proof} For $2\leq m\leq a+1$, let $\negbeta^{0,m} =  \left(b(m-2),0,\ldots,0\right)$ and $\negbeta^{i,m} = \left(a(b-i)-b,i,\ldots,i\right)$ for $i=1,\ldots,b-1$.
If $m=2$, then $\hGamma(\negP_m)=\hLambda(\negP_m)$ and so the result follows from the Theorem \ref{absolute maximals}. Now, let $m\geq 3$ and let us denote $R_m=\{\negbeta^{i,m} \ : \ i=0,1,\ldots,b-1\}$. First, we will prove that $R_m\subseteq \hLambda(\negP_m)\cap \cC(\negP_m)$. It is clear that $R_m\subseteq \cC(\negP_m)$. By Proposition~\ref{relmax}, it is sufficient to prove that $A=D_{\negbeta^{i,m}-\textbf{1}+\textbf{e}_1+\negej}$, with $0\leq i<b$, is discrepancy with respect to $P_1$ and $P_j$ for any $j\in I\backslash\{1\}$. In fact, from \eqref{eq1} and \eqref{eq2} there exist functions $h,g_2,\ldots,g_{a+1}\in \Fq(\cX_{f,g})$ such that
\begin{equation} \label{div}
\divv(h)=\sum_{k=2}^{a+1} P_k-aP_1 \ \quad \mbox{and} \ \quad \divv(g_j)=bP_j-bP_1, \ \mbox{ for }j=2,\ldots, a+1.
\end{equation}
 For $i\neq 0$, since
$$A=(a(b-i)-b)P_1+iP_j+(i-1)\sum_{k=2 \atop k\neq j}^m P_k,$$
and
$$\divv(h^{b-i}/g_j)=-(a(b-i)-b)P_1-iP_j+(b-i)\sum_{k=2 \atop k\neq j}^{a+1} P_k,$$
we have
$$h^{b-i}/g_j\in \cL(A)\backslash \cL(A-P_j).$$
Now, we must prove that $\cL(A-P_1)=\cL(A-P_1-P_j)$. By Lemma \ref{lemma noether}, it suffices to prove that $\cL(K-A+P_1+P_j)\neq \cL(K-A+P_j)$, where $K$ is a canonical divisor on $\cX_{f,g}$. Taking the canonical divisor $K=(ab-a-b-1)P_1$, we obtain
$$K-A+P_1+P_j=a(i-1)P_1-(i-1)\sum_{k=2}^m P_k.$$
As
$$h^{i-1}\in \cL(K-A+P_1+P_j)\backslash \cL(K-A+P_j),$$
we thus have that $A$ is a discrepancy with respect to $P_1$ and $P_j$ for $j\in I\backslash\{1\}$. Therefore, by Proposition \ref{relmax}(3), we get $\negbeta^{i,m}\in \hLambda(\negP_m)$ for $i=1,\ldots,b-1$. Hence, to conclude that $R_m\subseteq \hLambda(\negP_m)\cap \cC(\negP_m)$, it remains to verify that $\negbeta^{0,m}\in \hLambda(\negP_m)$.
Again, we will prove that $\cL(A)\neq \cL(A-P_1)$ and $\cL(K-A+P_1+P_j)\neq \cL(K-A+P_j)$, where $A=D_{\negbeta^{0,m}-\textbf{1}+\textbf{e}_1+\textbf{e}_j}$ for all $j\in I\backslash\{1\}$. Notice that
$$A=b(m-2)P_1-\sum_{k=2 \atop k\neq j}^{m}P_k$$
and we obtain that
$$g_2\cdots g_{j-1}\cdot g_{j+1}\cdots g_m \in \cL(A)\backslash \cL(A-P_1).$$
Moreover, since $K-A+P_1+P_j=(a(b-1)-b(m-1))P_1+\sum_{k=2}^m P_k$, we get
$$\frac{h^{b-1}}{g_2\cdots g_m}\in \cL(K-A+P_1+P_j)\backslash \cL(K-A+P_j)$$
and so we conclude that $R_m\subseteq \hLambda(\negP_m)\cap \cC(\negP_m)$.
Now, let $\negbeta\in \hLambda(\negP_m)\cap \cC(\negP_m)$. Since $\nabla_j^m(\negbeta)\neq \emptyset$ for any $j\in I$, there exists an absolute maximal element $\negalpha^{i,m}\in \nabla_2^m(\negbeta)$, and thus $\beta_2=i$ and $\beta_3\geq i$. Similarly, there exists an absolute maximal element $\negalpha^{i',m}\in \nabla_3^m(\negbeta)$, and thus $\beta_3=i'$ and $\beta_2\geq i'$. Hence $i=i'$. Proceeding in the same way with pairs of the remaining indexes, we conclude that there exists an absolute maximal element $\negalpha^{i,m}\in \bigcap_{j=2}^m \nabla_j^m(\negbeta)$ and in particular that $\beta_j=i$ for $j=2,\ldots,m$. As $\negbeta\in\hLambda(\negP_m)$ and $m\geq 3$, it follows that $\negbeta\neq \negalpha^{i,m}$ and thus $\beta_1>\alpha^{i,m}_1$. Hence, for each ${\negbeta\in~\hLambda(\negP_m)\cap \cC(\negP_m)}$, there exists a unique $\negalpha^{i,m}\in~\hGamma(\negP_m)\cap \cC(\negP_m)$ such that $\negalpha^{i,m}\in~\nabla_{I\backslash\{1\}}(\negbeta)$. Therefore, $\#(\hGamma(\negP_m)\cap \cC(\negP_m))\geq \#(\hLambda(\negP_m)\cap \cC(\negP_m))$. As $\#R_m=\#(\hGamma(\negP_m)\cap \cC(\negP_m))$ and ${R_m\subseteq \hLambda(\negP_m)\cap \cC(\negP_m)}$, we have $ \hLambda(\negP_m)\cap \cC(\negP_m)=R_m$, which proves the result.
\end{proof}

We can thus obtain all relative maximals elements of $\hH(\negP_m)$ as follows.

\begin{corollary}
Let $P_1,\ldots,P_{a+1}$ be rational points on $\cX_{f,g}$ and $\cC_m$ be as above. For ${2\leq m\leq a+1}$, we have
$$\hLambda(\negP_m)=\left\{\left(a(b-i)-b-b\textstyle\sum_{j=2}^m d_j,i+bd_2,\ldots,i+bd_m\right) \ : \ {d_j\in \ZZ \ \mbox{for } j=2,\ldots,m \atop \mbox{and } \ 1\leq i\leq b-1}\right\}$$
$$\bigcup \left\{\left(b(m-2)-b\textstyle\sum_{j=2}^m d_j,bd_2,\ldots,bd_m\right) \ : \   d_j\in \ZZ \ \mbox{for } j=2,\ldots,m\right\}.$$
\end{corollary}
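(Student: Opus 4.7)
The plan is to apply Theorem \ref{maximals}, which gives the Minkowski decomposition
$$
\hLambda(\negP_m)=\bigl(\hLambda(\negP_m)\cap \cC(\negP_m)\bigr)+\Theta(\negP_m).
$$
The first summand is already identified by Theorem \ref{relativemaximals} as $\{\negbeta^{0,m},\negbeta^{1,m},\ldots,\negbeta^{b-1,m}\}$, so the task reduces to producing a clean parametrization of the lattice $\Theta(\negP_m)$ and then explicitly adding each of these representatives.

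First I would observe that $a_i=b$ for every $i\in\{2,\ldots,m\}$: from \eqref{eq2} one has $bP_i\sim bP_1\sim bP_{i-1}$ and hence $a_i\leq b$, while the reverse inequality is built into the description of the region $\cC(\negP_m)=\{\negbeta\in\ZZ^m:0\leq \beta_i<b\}$ adopted throughout Section~4 and used in Theorems \ref{absolute maximals} and \ref{relativemaximals}. Consequently $\negeta^{k+1}=-b\,\nege_k+b\,\nege_{k+1}$ for $k=1,\ldots,m-1$, and a generic element of $\Theta(\negP_m)$ takes the shape
$$
\sum_{k=1}^{m-1} b_k\,\negeta^{k+1}=\bigl(-b\, b_1,\ b(b_1-b_2),\ \ldots,\ b(b_{m-2}-b_{m-1}),\ b\, b_{m-1}\bigr),
$$
with $b_1,\ldots,b_{m-1}\in\ZZ$.

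Next I would introduce the $\ZZ$-linear automorphism of $\ZZ^{m-1}$ defined by $d_j:=b_{j-1}-b_j$ for $j=2,\ldots,m$ (with the convention $b_m=0$), whose inverse is $b_k=\sum_{j=k+1}^m d_j$; a telescoping sum yields $b_1=\sum_{j=2}^m d_j$. Under this reparametrization,
$$
\Theta(\negP_m)=\bigl\{\bigl(-b\,\textstyle\sum_{j=2}^m d_j,\ b\, d_2,\ \ldots,\ b\, d_m\bigr):d_j\in\ZZ\bigr\}.
$$
Adding $\negbeta^{i,m}=(a(b-i)-b,i,\ldots,i)$ for $1\leq i\leq b-1$ to this set produces the first family in the statement, while adding $\negbeta^{0,m}=(b(m-2),0,\ldots,0)$ produces the second.

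The only step that seems to demand real care is pinning down $a_i=b$, since the lattice $\Theta(\negP_m)$ is sensitive to those integers; once this is in hand, the rest is a routine telescoping change of coordinates.
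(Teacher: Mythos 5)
Your proposal is correct and follows essentially the same route as the paper: invoke Theorem \ref{maximals} to reduce to $(\hLambda(\negP_m)\cap\cC(\negP_m))+\Theta(\negP_m)$, use \eqref{eq2} to see that $\Theta(\negP_m)$ is generated by the vectors $-b\,\nege_{i-1}+b\,\nege_i$, reparametrize its elements as $(-b\sum_{j=2}^m d_j,\ bd_2,\ldots,bd_m)$, and add the representatives from Theorem \ref{relativemaximals}. The only difference is that you spell out the telescoping change of coordinates and the identification $a_i=b$ explicitly, both of which the paper leaves implicit.
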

\begin{proof}
It follows from Theorem \ref{maximals} that $\hLambda(\negP_m)=(\hLambda(\negP_m)\cap \cC(\negP_m))+\Theta(\negP_m)$. By \eqref{eq2}, we have that $\Theta(\negP_m)$ is generated by $m$-tuples of type $$\negeta^i=(0,\ldots,0,-b,\underbrace{b}_{ith},0,\ldots,0),$$ for $i=2,\ldots,m$. Hence an element of $\Theta(\negP_m)$ has the form $(-b\sum_{i=2}^m d_i,bd_2,\ldots,bd_m)$ for $d_2,\ldots,d_m\in \ZZ$, and the result thus follows from Theorem \ref{relativemaximals}.
\end{proof}

\begin{corollary} \label{cor:relmax}
Let $P_1,\ldots,P_{a+1}$ be rational points on $\cX_{f,g}$ and $\cC_m$ as above. For ${2\leq m\leq a+1}$, we have
$$\Lambda(\negP_m)=\left\{\left(a(b-i)-b-b\textstyle\sum_{j=2}^m d_j,i+bd_2,\ldots,i+bd_m\right) \ : \ {{{d_j\in \NN_0 \ \text{for } j=2,\ldots,m \ \text{with}} \atop \ { a(b-i)-b(1+\sum_{j=2}^m d_j)\geq 0}} \atop {\text{and } \ 1\leq i\leq b-1}}^{}\right\}.$$
\end{corollary}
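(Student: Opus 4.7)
The plan is a direct unpacking of the preceding corollary, which describes $\hLambda(\negP_m)$ across all of $\ZZ^m$ as a union of integer-parametrized families, combined with the defining intersection $\Lambda(\negP_m):=\hLambda(\negP_m)\cap\NN_0^m$. The entire task reduces to translating coordinatewise non-negativity into explicit restrictions on the parameters $(i,d_2,\ldots,d_m)$ appearing in that corollary.

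Concretely, I would start from the family
$$\left(a(b-i)-b-b\textstyle\sum_{j=2}^m d_j,\ i+bd_2,\ldots,i+bd_m\right),\qquad 1\leq i\leq b-1,\ d_j\in\ZZ,$$
furnished by the previous corollary, and inspect each coordinate in turn. For $j\in\{2,\ldots,m\}$, the requirement $i+bd_j\geq 0$ together with $1\leq i\leq b-1$ gives $bd_j\geq -(b-1)>-b$, which collapses to $d_j\geq 0$ since $d_j\in\ZZ$. For the first coordinate, non-negativity is precisely the inequality $a(b-i)-b(1+\sum_{j=2}^m d_j)\geq 0$ displayed in the statement. The reverse inclusion is immediate: each tuple satisfying these constraints is, by the previous corollary, already an element of $\hLambda(\negP_m)$, and by construction lies in $\NN_0^m$.

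The argument requires no new geometric input beyond what was already packaged into Theorem~\ref{relativemaximals} (and Theorem~\ref{maximals} behind the preceding corollary); the sole technical point is the elementary observation that $bd_j\geq -i$ with $1\leq i\leq b-1$ forces $d_j\in\NN_0$, which is what produces the clean $d_j\in\NN_0$ condition in the statement. I therefore anticipate no substantive obstacle beyond careful bookkeeping of the integrality and positivity conditions on the parameters $i$ and $d_2,\ldots,d_m$.
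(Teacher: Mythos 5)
Your overall strategy is the same as the paper's: the proof given there is a single line asserting that the statement follows directly from the preceding corollary together with the definition $\Lambda(\negP_m)=\hLambda(\negP_m)\cap \NN_0^m$, and your coordinatewise analysis of the family $\left(a(b-i)-b-b\sum_{j=2}^m d_j,\, i+bd_2,\ldots,i+bd_m\right)$ is the right way to make that explicit. In particular, the observation that $i+bd_j\geq 0$ together with $1\leq i\leq b-1$ forces $d_j\geq 0$ is exactly the bookkeeping needed for that family.

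There is, however, a concrete gap: the preceding corollary describes $\hLambda(\negP_m)$ as a union of \emph{two} parametrized families, and you only process the first. The second family, $\left(b(m-2)-b\sum_{j=2}^m d_j,\, bd_2,\ldots,bd_m\right)$ with $d_j\in\ZZ$ (the translates of $\negbeta^{0,m}=(b(m-2),0,\ldots,0)$ from Theorem \ref{relativemaximals}), does meet $\NN_0^m$: taking all $d_j=0$ gives $(b(m-2),0,\ldots,0)$, and more generally any choice with $d_j\geq 0$ and $\sum_{j=2}^m d_j\leq m-2$ lands in $\NN_0^m$. None of these tuples is of the displayed form, since their last $m-1$ coordinates are divisible by $b$, whereas the displayed tuples have those coordinates congruent to $i$ with $1\leq i\leq b-1$. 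Consequently, an argument that simply intersects the preceding corollary with $\NN_0^m$ cannot arrive at the stated right-hand side; you must either explain why the second family is discarded or note that the stated equality omits it. (This imprecision is arguably inherited from the paper itself: one can check that every second-family element $\negbeta\in\NN_0^m$ has at least one coordinate equal to $0$ in a position forcing $\overline{\nabla}(\negbeta)\cap\NN_0^m=\emptyset$, so these elements contribute nothing to Theorems \ref{gaps} and \ref{puregaps} --- which is presumably why they are dropped --- but a complete proof should not pass over them in silence.)
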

\begin{proof}
As $\Lambda(\negP_m)=\hLambda(\negP_m)\cap \NN_0^m$, it is a direct consequence of the previous result.
\end{proof}

The following result presents explicitly the gaps of the curves $\cX_{f,g}$ at the $m$-tuples $\negP_m$. 

\begin{proposition} \label{gapsgf} Let $P_1,\ldots,P_{a+1}$ be rational points on $\cX_{f,g}$ as above and ${2\leq m\leq a+1}$. For $i=1,\ldots,b-1$, let
$$S_{i1}=\left\{\left(a(b-i)-b(1+j),\beta_2,\ldots,\beta_m\right)\in \NN_0^m \ : \ {j=0,\ldots,\lfloor\textstyle\frac{a(b-i)-b}{b} \rfloor, \ d_t\in \NN_0, \atop \ \beta_t<i+bd_t, \ \mbox{and } \sum_{i=2}^m d_t=j }\right\},$$
and for each $2\leq k\leq m$, let
$$S_{ik}=\left\{(\beta_1,\ldots,\beta_{k-1},i+bj,\beta_{k+1},\ldots,\beta_m)\in \NN_0^m \ : {j=0,\ldots,\lfloor\textstyle\frac{a(b-i)-b}{b} \rfloor, \ d_t\in \NN_0, \ \beta_t<i+bd_t,\atop \ \beta_1<a(b-i)-b(1+d_1), \ \mbox{and }d_1-\sum_{t=2 \atop t\neq k}^m d_t=j} \right\}.$$ Then
$$G(\negP_m)=\bigcup_{i=1}^{b-1}\left( \bigcup_{k=1}^m S_{ik}\right).$$
\end{proposition}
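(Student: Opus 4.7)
The plan is to apply Theorem~\ref{gaps} together with the explicit parametrization of $\Lambda(\negP_m)$ provided by Corollary~\ref{cor:relmax}. Theorem~\ref{gaps} gives
$$G(\negP_m) \;=\; \bigcup_{\negbeta^*\in\Lambda(\negP_m)} \bigcup_{k=1}^m \bigl(\overline{\nabla}_k(\negbeta^*)\cap \NN_0^m\bigr),$$
so I would first commute the two unions and then regroup the outer union according to the parameter $i\in\{1,\ldots,b-1\}$ that indexes the elements of $\Lambda(\negP_m)$ in the Corollary. The goal then reduces to showing, for each fixed pair $(i,k)$, that the corresponding slice coincides with $S_{ik}$.

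For $k=1$, the definition of $\overline{\nabla}_1$ fixes the first coordinate of a relative maximal element $\negbeta^*=(a(b-i)-b(1+\sum_{t=2}^m d_t),\ i+bd_2,\ldots,i+bd_m)$ and requires $0\le \beta_t<i+bd_t$ for $t\ge 2$. Introducing $j:=\sum_{t=2}^m d_t$, the first coordinate ranges over $a(b-i)-b(1+j)$ with $j=0,1,\ldots,\lfloor(a(b-i)-b)/b\rfloor$---exactly the range imposed by $\beta^*_1\ge 0$---and the constraints on the remaining $\beta_t$ reproduce the defining conditions of $S_{i1}$. For $k\ge 2$, the $k$-th coordinate is fixed to $i+bd_k$; setting $j:=d_k$ and introducing the auxiliary $d_1:=\sum_{t=2}^m d_t$, the inequality $\beta_1<a(b-i)-b(1+\sum_{t=2}^m d_t)$ rewrites as $\beta_1<a(b-i)-b(1+d_1)$ together with the consistency relation $d_1-\sum_{t\ne 1,k}d_t=j$, while the conditions $0\le \beta_t<i+bd_t$ for $t\ne 1,k$ are untouched. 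This matches $S_{ik}$ verbatim, and combining over $i,k$ completes the argument.

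The main obstacle is the bookkeeping required in the $k\ge 2$ case to identify the auxiliary parameter $d_1$ of $S_{ik}$ with the sum $\sum_{t=2}^m d_t$ from the Corollary, and to check that the range $0\le j\le\lfloor(a(b-i)-b)/b\rfloor$ is indeed achievable (e.g.\ by taking $d_k=j$ and all other $d_t=0$ at the maximum). A subtler point worth verifying is that, although Theorem~\ref{relativemaximals} shows $\hLambda(\negP_m)$ also contains translates of $\negbeta^{0,m}=(b(m-2),0,\ldots,0)$, any such translate lying in $\NN_0^m$ has $\overline{\nabla}_k\cap \NN_0^m=\emptyset$ for every $k$ (the non-negativity constraints on the other coordinates cannot be simultaneously satisfied), so these elements contribute no gaps and the parametrization of $\Lambda(\negP_m)$ in Corollary~\ref{cor:relmax} suffices for the computation.
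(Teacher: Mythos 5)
Your proof is correct and follows essentially the same route as the paper, whose own argument simply observes that each $S_{ik}$ equals $\overline{\nabla}_k(\negbeta^*)\cap\NN_0^m$ for the relative maximal elements $\negbeta^*$ parametrized in Corollary~\ref{cor:relmax} and then invokes Theorem~\ref{gaps}. Your additional verification that the translates of $\negbeta^{0,m}$ lying in $\NN_0^m$ contribute only empty sets $\overline{\nabla}_k(\cdot)\cap\NN_0^m$ is a worthwhile detail that the paper leaves implicit.
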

\begin{proof} It follows from Theorem \ref{gaps} by noticing that
$$S_{ik}=\overline{\nabla}_k\left(a(b-i)-b-b\textstyle\sum_{j=2}^m d_j,i+bd_2,\ldots,i+bd_m\right)\cap \NN_0^m.$$
\end{proof}

Let $$A^*:=\left\{a(b-i)-b(1+j) \ : \ i=1,\ldots,b-1, \ j=0,\ldots,\lfloor\textstyle
\frac{a(b-i)-b}{b}\rfloor\right\}$$
and
$$A:=\left\{i+bj \ : \ i=1,\ldots,b-1, \ j=0,\ldots,\lfloor\textstyle\frac{a(b-i)-b}{b}\rfloor\right\}.$$
We have
$$G_0(\negP_m)\subseteq  A^*\times A^{m-1}.$$

\begin{example} Let $\ell$ be a prime power and let $r$ be a positive integer. Let $\cX$ be the Norm-Trace curve defined over $\FF_{\ell^{r}}$ by the affine equation
$$x^{\frac{\ell^r-1}{\ell-1}}=y^{\ell^{r-1}}+y^{\ell^{r-2}}+\cdots+y$$
of genus $(\ell^r-1)(\frac{\ell^r-1}{\ell-1}-1)/2$. Let $P_\infty$ be the point at infinity $(0:1:0)$ of $\cX$ and let $P_{0b_j}$ be the points $(0:b_j:1)$ with $b_j\in \FF_{\ell^{r}}$ such that $b_j^{\ell^{r-1}}+b_j^{\ell^{r-2}}+\cdots+b_j=0$ for $j=1,\ldots,\ell^{r-1}$. 
In this curve, we have $a=\ell^{r-1}$ and $b=(\ell^r-1)/(\ell-1)$ because
$$\divv(x)=\sum_{j=1}^{\ell^{r-1}} P_{0b_j}-\ell^{r-1} P_\infty \quad \mbox{and} \quad \divv(y-b_j)=\textstyle\frac{\ell^r-1}{\ell-1}(P_{0b_j}-P_\infty) \ \mbox{for } j=1,\ldots,\ell^{r-1}.$$ Hence, according to Theorem \ref{maximals} and Theorem \ref{relativemaximals}, the relative maximal elements in the generalized Weierstrass semigroup $\hH(P_\infty,P_{0b_1},\ldots,P_{0b_{m-1}})$, for $2\leq m\leq \ell^{r-1}+1$, are completely determined by $(\ell^r-1)/(\ell-1)+m-1$ elements. For instance, for $\ell=4$ and $r=2$, we have the Hermitian curve of genus $6$. According to Corollary \ref{cor:relmax}, the relative maximal elements of $\hH(P_\infty,P_{0b_1},P_{0b_{2}})$ in $\NN^3$ are
\begin{equation}\label{eqpuregaps}
\begin{array}{c}
\neggamma^1=(1, 1, 11), \ \neggamma^2=(1, 6, 6), \ \neggamma^3=(1, 11, 1), \ \neggamma^4=(2, 2, 7), \ \neggamma^5=(2, 7, 2),  \\
\neggamma^6=(3, 3, 3), \ \neggamma^7=(6, 1, 6), \ \neggamma^8=(6, 6, 1), \ \neggamma^9=(7, 2, 2), \mbox{and } \neggamma^{10}=(11, 1, 1).
\end{array}
\end{equation}
By Theorem \ref{puregaps} and Remark \ref{rmk:puregaps}, the pure gaps at $(P_\infty,P_{0b_1},P_{0b_{2}})$ are
$$\begin{array}{c}
(1, 1, 1), \ (2, 1, 1), \ (1, 1, 2), \ (1, 2, 1), \ (3, 1, 1), \ (1, 1, 3), \ (1, 3, 1), \ (2, 1, 2), \\
(2, 2, 1), \ (1, 2, 2), \ (3, 1, 2), \ (2, 1, 3), \ (3, 2, 1), \ (1, 2, 3), \ (2, 3, 1), \ \mbox{and }(1, 3, 2).
\end{array}$$
They are determined from the following intersections:
\begin{small}
$$(1, 1, 1)\in\overline{\nabla}_1(\neggamma^2)\cap \overline{\nabla}_2(\neggamma^7)\cap \overline{\nabla}_3(\neggamma^8)$$
$$(2, 1, 1)\in\overline{\nabla}_1(\neggamma^4)\cap \overline{\nabla}_2(\neggamma^7)\cap \overline{\nabla}_3(\neggamma^8)=\overline{\nabla}_1(\neggamma^5)\cap \overline{\nabla}_2(\neggamma^7)\cap \overline{\nabla}_3(\neggamma^8)$$
$$(1, 1, 2)\in\overline{\nabla}_1(\neggamma^2)\cap \overline{\nabla}_2(\neggamma^7)\cap \overline{\nabla}_3(\neggamma^5)=\overline{\nabla}_1(\neggamma^2)\cap \overline{\nabla}_2(\neggamma^7)\cap \overline{\nabla}_3(\neggamma^9)$$
$$(1, 2, 1)\in\overline{\nabla}_1(\neggamma^2)\cap \overline{\nabla}_2(\neggamma^4)\cap \overline{\nabla}_3(\neggamma^8)=\overline{\nabla}_1(\neggamma^2)\cap \overline{\nabla}_2(\neggamma^9)\cap \overline{\nabla}_3(\neggamma^8)$$
$$ (3, 1, 1)\in\overline{\nabla}_1(\neggamma^6)\cap \overline{\nabla}_2(\neggamma^7)\cap \overline{\nabla}_3(\neggamma^8) $$
$$(1, 1, 3)\in\overline{\nabla}_1(\neggamma^2)\cap \overline{\nabla}_2(\neggamma^7)\cap \overline{\nabla}_3(\neggamma^6) $$
$$ (1, 3, 1)\in\overline{\nabla}_1(\neggamma^2)\cap \overline{\nabla}_2(\neggamma^6)\cap \overline{\nabla}_3(\neggamma^8) $$
$$(2, 1, 2)\in\overline{\nabla}_1(\neggamma^4)\cap \overline{\nabla}_2(\neggamma^7)\cap \overline{\nabla}_3(\neggamma^9)$$
$$(2, 2, 1)\in\overline{\nabla}_1(\neggamma^5)\cap \overline{\nabla}_2(\neggamma^9)\cap \overline{\nabla}_3(\neggamma^8)$$
$$(1, 2, 2)\in\overline{\nabla}_1(\neggamma^2)\cap \overline{\nabla}_2(\neggamma^4)\cap \overline{\nabla}_3(\neggamma^5)$$
$$(3, 1, 2)\in\overline{\nabla}_1(\neggamma^6)\cap \overline{\nabla}_2(\neggamma^7)\cap \overline{\nabla}_3(\neggamma^9)$$
$$(2, 1, 3)\in\overline{\nabla}_1(\neggamma^4)\cap \overline{\nabla}_2(\neggamma^7)\cap \overline{\nabla}_3(\neggamma^6)$$
$$(3, 2, 1)\in\overline{\nabla}_1(\neggamma^6)\cap \overline{\nabla}_2(\neggamma^9)\cap \overline{\nabla}_3(\neggamma^8)$$
$$(1, 2, 3)\in\overline{\nabla}_1(\neggamma^2)\cap \overline{\nabla}_2(\neggamma^4)\cap \overline{\nabla}_3(\neggamma^6)$$
$$(2, 3, 1)\in\overline{\nabla}_1(\neggamma^5)\cap \overline{\nabla}_2(\neggamma^6)\cap \overline{\nabla}_3(\neggamma^8)$$
$$(1, 3, 2)\in\overline{\nabla}_1(\neggamma^2)\cap \overline{\nabla}_2(\neggamma^6)\cap \overline{\nabla}_3(\neggamma^5)$$
\end{small}
The list of all gaps at $(P_\infty,P_{0b_1},P_{0b_{2}})$ can be obtained from Proposition \ref{gapsgf} by using the elements in \eqref{eqpuregaps} as
$$
\overline{\nabla}(\neggamma^1)\cap \NN_0^3= \{(1, 0, s) \ : 0\leq s\leq 10\}\cup \{(0, 1, s ) \ : 0\leq s\leq 10\}\cup \{(0, 0, 11)\}
$$
$$
\overline{\nabla}(\neggamma^2)\cap \NN_0^3=\{(1,r,s) \ : \ 1\leq r,s\leq 5\}\cup \{(0,6,s) \ : \ 0\leq s\leq 5\}\cup \{(0,r,6) \ : \ 0\leq s\leq 5\}
$$
$$
\overline{\nabla}(\neggamma^3)\cap \NN_0^3=\{(1, r, 0) \ : \ 0\leq r\leq 10 \}\cup \{(0,11,0)\} \cup\{(0, r , 1) \ : \ 0\leq r\leq 10 \}
$$
$$
\overline{\nabla}(\neggamma^4)\cap \NN_0^3=\{(2,r,s), (r,2,s)  \ : \ 0\leq r\leq 1 \ \mbox{and } 0\leq s\leq 6\}\cup \{(r,s,7) \ : \ 0\leq r,s\leq 1\}
$$
$$
\overline{\nabla}(\neggamma^5)\cap \NN_0^3=\{(2,r,s), (s,r,2)  \ : \ 0\leq s\leq 1 \ \mbox{and } 0\leq r\leq 6\}\cup \{(r,7,s) \ : \ 0\leq r,s\leq 1\} 
$$
$$
\overline{\nabla}(\neggamma^6)\cap \NN_0^3=\{(3,r,s) \ : \ 0\leq r, s\leq 2\}\cup \{(r,3,s) \ : \ 0\leq r, s\leq 2\}\cup \{(r,s,3) \ : \ 0\leq r, s\leq 2\}
$$
$$
\overline{\nabla}(\neggamma^7)\cap \NN_0^3=\{(6,0,s) \ : \ 0\leq  s\leq 5\}\cup \{(r,1,s) \ : \ 0\leq  r,s\leq 5\}\cup \{(r,0,6) \ : \ 0\leq  r\leq 5\}
$$
$$
\overline{\nabla}(\neggamma^8)\cap \NN_0^3=\{(6,s,0) \ : \ 0\leq  s\leq 5\}\cup \{(r,6,0) \ : \ 0\leq  r\leq 5\}\cup \{(r,s,1) \ : \ 0\leq  r,s\leq 5\}
$$
$$
\overline{\nabla}(\neggamma^9)\cap \NN_0^3=\{(7,r,s) \ : \ 0\leq r, s\leq 1\}\cup \{(r,2,s), (r,s,2) \ : \ 0\leq r\leq 6 \ \mbox{and } 0\leq s\leq 1\}	
$$
$$
\overline{\nabla}(\neggamma^{10})\cap \NN_0^3= \{(11, 0, 0)\}\cup \{(s, 1, 0) \ : 0\leq s\leq 10\}\cup \{(s, 0, 1 ) \ : 0\leq s\leq 10\}
$$

Taking now $\ell=2$ and $r=3$, $\cX$ is a curve of genus $9$. In this case, the relative maximal elements of $\hH(P_\infty,P_{0b_1},P_{0b_{2}})$ in $\NN^3$ are
$$(17, 1, 1), \ (10, 1, 8), \ (3, 1, 15), \ (13, 2, 2), \ (6, 2, 9), \ (9, 3, 3), \ (2, 3, 10), $$ $$(5, 4, 4), \ 	(1, 5, 5), \ (10, 8, 1), \ (3, 8, 8), \ (6, 9, 2), \ (2, 10, 3), \mbox{and } (3, 15, 1).$$
In the same way as before, we may use these elements to list all gaps by considering their associated sets and to find that the pure gaps are
\begin{footnotesize}
$$(1, 1, 1), (1, 1, 2), (1, 1, 3), (1, 1, 4), (1, 2, 1), (1, 2, 2), (1, 2, 3), (1, 2, 4), (1, 3, 1), (1, 3, 2), (1, 3, 3), (1, 3, 4), $$
$$ (1, 4, 1), (1, 4, 2), (1, 4, 3), ( 2, 1, 1), (2, 1, 2), (2, 1, 3), (2, 1, 4), (2, 1, 8), (2, 1, 9), (2, 2, 1), (2, 2, 2), (2, 2, 3),   $$
$$  (2, 2, 4), (2, 2, 8), (2, 3, 1), (2, 3, 2), (2, 4, 1), (2, 4, 2), (2, 8, 1), (2, 8, 2), (2, 9, 1), (3, 1, 1), (3, 1, 2), (3, 1, 3), $$
$$  (3, 1, 4), (3, 2, 1), (3, 2, 2), (3, 2, 3), (3, 2, 4), (3, 3, 1), (3, 3, 2), (3, 4, 1), (3, 4, 2), (5, 1, 1), (5, 1, 2), (5, 1, 3),  $$ 
$$ (5, 2, 1), (5, 2, 2), (5, 2, 3), (5, 3, 1), (5, 3, 2), (6, 1, 1), (6, 1, 2), (6, 1, 3), (6, 2, 1), (6, 3, 1), (9, 1, 1), (9, 1, 2), (9, 2, 1).$$
\end{footnotesize}

\end{example}


\end{document}